\newtheorem{theorem}{Theorem}[section]
\newtheorem{proposition}[theorem]{Proposition}
\newtheorem{corollary}[theorem]{Corollary}
\theoremstyle{definition}
\newtheorem{definition}[theorem]{Definition}
\newtheorem{problem}[theorem]{Problem}
\theoremstyle{remark}
\newtheorem{remark}[theorem]{Remark}
\title[Half Integer Fourier Modes]{Fredholm Theory on Twisted Hilbert Scales: A Frame-Theoretic Approach to Half-Integer Fourier Modes}
\author{Anik Chakraborty and Varinder Kumar}
\address{Anik Chakraborty, Department of Mathematics, University of Delhi, India}
\email{achakraborty@maths.du.ac.in}
\address{Varinder Kumar, Department of Mathematics, Shaheed Bhagat Singh College, University of Delhi, India}
\email{varinder.kumar@sbs.du.ac.in;}
\date{\today}
\subjclass[2020]{46E35, 47A53, 47A10, 47B25, 58J20, 42C15}
\keywords{
Hilbert scales, 
Fredholm operators, 
spectral flow, 
twisted Sobolev spaces, 
half-integer Fourier modes, 
unitary symmetry, 
frame theory, 
zeta-regularized determinant
}
\begin{document}

\begin{abstract}
We construct a Hilbert scale on $L^2([0,1])$ via a unitary twist operator that maps the standard Fourier basis to half-integer frequency exponentials. The resulting weighted spaces, equipped with norms indexed by $(1+|k+\tfrac{1}{2}|^2)^s$, admit a canonical diagonal operator with the compact resolvent and spectrum $\{k+\tfrac{1}{2}\}_{k\in\mathbb{Z}}$. We prove that this operator defines a Fredholm mapping between adjacent scale levels with index zero, provide an explicit solution to an antiperiodic boundary value problem illustrating the framework, and compute the zeta-regularized determinant $\det_\zeta(|\widetilde{A}|) = 2$ using the Hurwitz zeta function. We establish stability under bounded perturbations and verify the well-definedness of spectral flow. The framework is developed entirely through functional-analytic methods without differential operators or boundary value problems. The construction is motivated by twisted spinor boundary conditions on non-orientable manifolds, though the present work is formulated abstractly in operator-theoretic language.
\end{abstract}

\maketitle

\section{Introduction}

Classical Sobolev spaces $H^s([0,1])$ on compact intervals are typically constructed via derivatives or integer-indexed Fourier series $\{e^{2\pi i k x}\}_{k\in\mathbb{Z}}$. In applications involving twisted boundary conditions---arising in non-orientable geometry or certain quantum systems---momentum quantization shifts to half-integers, replacing the standard basis with $\{e^{2\pi i(k+\frac{1}{2})x}\}_{k\in\mathbb{Z}}$.

In this paper, we develop a systematic operator-theoretic framework for such half-integer Fourier spaces from first principles. Starting from a simple unitary multiplication operator
\[
U: f(x) \mapsto e^{\pi i x} f(x) \quad \text{on } L^2([0,1]),
\]
we obtain the twisted orthonormal basis $\{\psi_k\}_{k\in\mathbb{Z}}$ as the image of the standard Fourier basis under $U$. Using this basis, we define a one-parameter Hilbert scale $\{H^s_{1/2}\}_{s\in\mathbb{R}}$ via weighted $\ell^2$ norms, then analyze a canonical self-adjoint operator
\[
\widetilde{A}: H^{s+1}_{1/2} \to H^s_{1/2}
\]
with eigenvalues $k+\tfrac{1}{2}$. Our main results are:

\begin{itemize}
\item The embeddings $H^{s+1}_{1/2} \hookrightarrow H^s_{1/2}$ are compact for all $s\in\mathbb{R}$ (Proposition~\ref{prop:compact-embedding}).
\item The operator $\widetilde{A}$ is Fredholm with $\ker(\widetilde{A})=\{0\}$ and index zero (Theorem~\ref{thm:fredholm}).
\item An explicit antiperiodic boundary value problem is solved, demonstrating how solutions automatically satisfy $u(1) = -u(0)$ (Section~\ref{sec:example}).
\item The zeta-regularized determinant is computed explicitly: $\det_\zeta(|\widetilde{A}|) = 2$ (Theorem~\ref{thm:zeta-det}, Appendix~\ref{app:zeta-det}).
\item Fredholmness is stable under bounded perturbations (Theorem~\ref{thm:stability}).
\item Spectral flow is well-defined for continuous families (Theorem~\ref{thm:spectral-flow}).
\end{itemize}

\subsection*{Operator-Theoretic Perspective}

The present construction provides an \emph{abstract operator-theoretic model} for elliptic operators with twisted boundary conditions. While motivated by non-orientable geometric settings (e.g., Pin structures on M\"obius-type spaces \cite{lawson-michelsohn, atiyah-patodi-singer}), the analysis is carried out purely in Hilbert space terms and does not rely on differential operators, manifold structure, or boundary value problems. The framework is entirely reducible to weighted $\ell^2$ sequences and diagonal multiplication operators.

This approach offers several advantages:
\begin{enumerate}
\item \textbf{Clarity}: The half-integer eigenvalue spectrum $k+\tfrac{1}{2}$ immediately ensures $\ker(\widetilde{A})=\{0\}$, eliminating the zero mode present in standard Sobolev theory.
\item \textbf{Elementary compactness}: The compact embedding $H^{s+1}_{1/2}\hookrightarrow H^s_{1/2}$ follows from weight decay on $\ell^2$ without Rellich-Kondrachov or interpolation machinery.
\item \textbf{Transparent Fredholm structure}: Index zero is immediate from self-adjointness and zero kernel, bypassing heat-kernel or index-theoretic computations.
\item \textbf{Frame-theoretic generalization}: The construction naturally extends to non-uniform or overcomplete generating systems.
\end{enumerate}

\subsection*{Relation to Existing Frameworks}

\paragraph{Extended Sobolev scales.}
Mikhailets and Murach \cite{mikhailets-murach} developed general Hilbert scales with OR-varying function weights $\varphi(k)$. Our construction is a specific instance with $\varphi(k)=(1+|k+\tfrac{1}{2}|^2)^{s/2}$, canonically generated by a unitary operator rather than prescribed externally. The elementary nature of our proofs (avoiding abstract function-parameter theory) makes the framework more accessible.

\paragraph{Twisted boundary conditions in geometry.}
The Atiyah-Patodi-Singer (APS) index theorem \cite{atiyah-patodi-singer} treats Dirac operators on manifolds with boundary, where twisted spinor bundles arise from non-orientability. In the APS approach, half-integer modes appear geometrically via covering spaces and deck transformations. Our contribution is a \emph{functional-analytic extraction}: we isolate the operator-theoretic content (unitary symmetry $\to$ twisted basis $\to$ Fredholm theory) and develop it independently of geometric machinery.

\paragraph{Frame theory.}
Standard frame theory \cite{christensen} typically starts with a given basis or frame. We invert this: starting from a \emph{symmetry} (the unitary operator $U$), we derive the twisted basis as its spectral content, then build the Hilbert scale. This symmetry-first approach is natural in operator algebras but less common in frame-theoretic literature.

\subsection*{Organization}

Section~\ref{sec:twisted-hilbert} constructs the twisted Hilbert space $H_{1/2}$ and its unitary Fourier map. Section~\ref{sec:scale} develops the weighted Hilbert scale and proves compact embeddings. Section~\ref{sec:operator} introduces the canonical operator and establishes its spectral properties. Section~\ref{sec:comparison} compares with the standard Sobolev scale, clarifying the role of the half-integer shift. Section~\ref{sec:fredholm} proves the Fredholm theorem. Section~\ref{sec:extensions} treats perturbations and spectral flow. Section~\ref{sec:example} solves an explicit antiperiodic boundary value problem. Section~\ref{sec:conclusion} concludes. Appendix~\ref{app:zeta-det} computes the zeta-regularized determinant via the Hurwitz zeta function.

\section{The Twisted Hilbert Space \texorpdfstring{$H_{1/2}$}{H{1/2}}}

\label{sec:twisted-hilbert}

\subsection{The Twist Operator}

\begin{definition}
Let $H = L^2([0,1], dx)$. Define the operator $U : H \to H$ by
\[
(Uf)(x) := e^{\pi i x} f(x).
\]
\end{definition}

\begin{proposition}
\label{prop:unitary-U}
$U$ is unitary on $H = L^2([0,1], dx)$.
\end{proposition}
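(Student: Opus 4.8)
The plan is to show $U$ is unitary by verifying it is an isometry with a surjective (indeed everywhere-defined, norm-preserving, two-sided) inverse. The key observation is that $U$ is multiplication by the function $m(x) = e^{\pi i x}$, and since $|m(x)| = |e^{\pi i x}| = 1$ for every $x \in [0,1]$, this multiplication operator is norm-preserving.

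First I would verify isometry directly. For any $f \in H$, compute
\[
\|Uf\|^2 = \int_0^1 |e^{\pi i x} f(x)|^2 \, dx = \int_0^1 |e^{\pi i x}|^2 \, |f(x)|^2 \, dx = \int_0^1 |f(x)|^2 \, dx = \|f\|^2,
\]
using $|e^{\pi i x}| = 1$. This shows $U$ is a well-defined bounded operator on all of $H$ with $\|Uf\| = \|f\|$, hence injective. Next I would exhibit the inverse explicitly: define $V : H \to H$ by $(Vg)(x) = e^{-\pi i x} g(x)$. The same computation shows $V$ is an isometry, and $UV = VU = I$ since $e^{\pi i x} e^{-\pi i x} = 1$ pointwise. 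Thus $U$ is a bijection of $H$ onto itself whose inverse is also bounded.

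Finally, to conclude unitarity in the operator-theoretic sense, I would check $U^* = U^{-1}$. Since $U$ is multiplication by $m(x) = e^{\pi i x}$, its adjoint is multiplication by $\overline{m(x)} = e^{-\pi i x}$; indeed, for $f, g \in H$,
\[
\langle Uf, g \rangle = \int_0^1 e^{\pi i x} f(x) \, \overline{g(x)} \, dx = \int_0^1 f(x) \, \overline{e^{-\pi i x} g(x)} \, dx = \langle f, Vg \rangle,
\]
so $U^* = V = U^{-1}$. Combined with the isometry property, this establishes that $U$ is unitary.

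This proof is entirely routine; I do not anticipate any genuine obstacle. The only point requiring minor care is bookkeeping with complex conjugation in the adjoint computation—ensuring that the conjugate of $e^{\pi i x}$ produces $e^{-\pi i x}$ and that the inner product convention (conjugate-linear in the second slot) is applied consistently. The core fact driving everything is simply that multiplication by a unimodular function preserves the $L^2$ norm, which reduces the statement to the pointwise identity $|e^{\pi i x}| = 1$.
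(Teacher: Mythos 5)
Your proof is correct and follows essentially the same route as the paper: both arguments rest on the pointwise identity $|e^{\pi i x}| = 1$ and exhibit the same explicit inverse $V$, multiplication by $e^{-\pi i x}$. The only cosmetic difference is that you verify norm preservation plus the adjoint identity $U^* = V$, whereas the paper checks preservation of the inner product directly; these are equivalent by polarization, and your version is if anything slightly more complete in making $U^* = U^{-1}$ explicit.
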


\begin{proof}
We verify three properties.

\textbf{(1) Linearity.}
For $f, g \in H$ and $\alpha, \beta \in \mathbb{C}$,
\[
U(\alpha f + \beta g)(x) = e^{\pi i x}(\alpha f + \beta g)(x) = \alpha e^{\pi i x} f(x) + \beta e^{\pi i x} g(x) = \alpha (Uf)(x) + \beta (Ug)(x).
\]

\textbf{(2) Preservation of inner product.}
For $f, g \in H$,
\begin{align*}
\langle Uf, Ug \rangle &= \int_{0}^{1} e^{\pi i x} f(x) \cdot \overline{e^{\pi i x} g(x)} \, dx \\
&= \int_{0}^{1} e^{\pi i x} f(x) \cdot e^{-\pi i x} \overline{g(x)} \, dx = \int_{0}^{1} f(x) \overline{g(x)} \, dx = \langle f, g \rangle.
\end{align*}

\textbf{(3) Inverse.}
Define $V : H \to H$ by $(Vf)(x) = e^{-\pi i x} f(x)$. Then
\[
(VU f)(x) = e^{-\pi i x} (Uf)(x) = e^{-\pi i x} e^{\pi i x} f(x) = f(x),
\]
and similarly $(UV f)(x) = f(x)$. Hence $U^{-1} = V$.
\end{proof}

\subsection{Image of the Fourier Basis under the Twist}

Let
\[
\varphi_k(x) := e^{2\pi i k x}, \qquad k \in \mathbb{Z}
\]
denote the standard Fourier basis of $L^2([0,1])$. Define
\[
\psi_k := U\varphi_k.
\]

\begin{proposition}
\label{prop:half-integer-modes}
$\psi_k(x) = e^{2\pi i (k+\frac{1}{2})x}$.
\end{proposition}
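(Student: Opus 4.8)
The plan is to compute $\psi_k = U\varphi_k$ directly from the definitions, since both $U$ and $\varphi_k$ are given by explicit pointwise formulas and no limiting or density argument is required. First I would substitute the formula $(Uf)(x) = e^{\pi i x} f(x)$ from the definition of the twist operator, taking $f = \varphi_k$, so that $\psi_k(x) = e^{\pi i x}\varphi_k(x) = e^{\pi i x}\, e^{2\pi i k x}$. The next step is to combine the two exponential factors using the functional equation $e^a e^b = e^{a+b}$, valid for all complex exponents, which gives $\psi_k(x) = e^{\pi i x + 2\pi i k x}$.

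The only remaining manipulation is to rewrite the exponent in the target form. Factoring $2\pi i x$ out of $\pi i x + 2\pi i k x = 2\pi i x\left(k + \tfrac{1}{2}\right)$ yields $\psi_k(x) = e^{2\pi i (k+\frac{1}{2})x}$, as claimed. There is no genuine analytic obstacle here: the statement is a pointwise algebraic identity between smooth functions, and the only point demanding minor care is the arithmetic of the exponent, namely recognizing that the multiplicative twist by $e^{\pi i x}$ contributes exactly the half-integer shift $+\tfrac{1}{2}$ to the integer frequency $k$. I would emphasize, though, that despite its triviality this computation is the conceptual hinge of the paper: it establishes that $U$ carries the integer Fourier frequencies $\{k\}$ to the half-integer frequencies $\{k+\tfrac{1}{2}\}$, which is precisely the spectral shift that later forces $\ker(\widetilde{A})=\{0\}$ and underlies the Fredholm index and zeta-determinant results.
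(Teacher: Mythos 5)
Your proposal is correct and follows the same one-line computation as the paper: substitute $(Uf)(x) = e^{\pi i x}f(x)$ with $f = \varphi_k$ and combine the exponentials to obtain $e^{2\pi i (k+\frac{1}{2})x}$. There is nothing to add; the argument matches the paper's proof exactly.
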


\begin{proof}
\[
\psi_k(x) = (U\varphi_k)(x) = e^{\pi i x} \varphi_k(x) = e^{\pi i x} e^{2\pi i k x} = e^{2\pi i (k+\frac{1}{2})x}.
\]
\end{proof}

\subsection{Characterization of the Closed Span}

\begin{definition}
Define
\[
H_{\frac{1}{2}} := \overline{\operatorname{span}}\{\psi_k : k \in \mathbb{Z}\} \subset L^2([0,1]).
\]
\end{definition}

\begin{proposition}[Characterization of $H_{1/2}$]
\label{prop:trivial-orthogonal-complement}
The orthogonal complement of $H_{\frac{1}{2}}$ in $L^2([0,1])$ is trivial. Equivalently,
\[
\langle f, \psi_k \rangle = 0 \text{ for all } k \in \mathbb{Z} \quad \Longrightarrow \quad f = 0 \text{ a.e.}
\]
\end{proposition}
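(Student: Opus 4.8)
The plan is to leverage the unitarity of $U$ to reduce the statement to the classical completeness of the standard Fourier system $\{\varphi_k\}_{k\in\Z}$ in $L^2([0,1])$. The guiding principle is that a unitary operator carries an orthonormal basis to an orthonormal basis: since $\{\varphi_k\}$ is a complete orthonormal system and $\psi_k = U\varphi_k$, the family $\{\psi_k\}$ inherits completeness, which is exactly the assertion that its orthogonal complement vanishes. Rather than arguing completeness abstractly, I would prove the equivalent kernel formulation directly, as it makes the role of unitarity most transparent.

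Concretely, I would start from the hypothesis $\langle f, \psi_k\rangle = 0$ for all $k \in \Z$ and move $U$ across the inner product. Using $\psi_k = U\varphi_k$ together with the fact that a unitary operator satisfies $U^* = U^{-1} = V$, where $V$ is the inverse constructed in Proposition~\ref{prop:unitary-U}, one computes
\[
\langle f, \psi_k\rangle = \langle f, U\varphi_k\rangle = \langle U^* f, \varphi_k\rangle = \langle Vf, \varphi_k\rangle .
\]
Thus the hypothesis is equivalent to $\langle Vf, \varphi_k\rangle = 0$ for every $k$, i.e.\ every Fourier coefficient of $Vf$ vanishes. One must be careful here to respect the complex conjugation in the second slot of the inner product, which is precisely what turns $e^{\pi i x}$ into $e^{-\pi i x}$ and produces $V$ rather than $U$.

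The decisive step is to invoke the completeness of the standard exponential basis: since $\{\varphi_k\}_{k\in\Z}$ is an orthonormal basis of $L^2([0,1])$, any vector orthogonal to all of them is zero, so $Vf = 0$ in $L^2([0,1])$. Finally, since $(Vf)(x) = e^{-\pi i x} f(x)$ and $|e^{-\pi i x}| = 1$, we have $|f(x)| = |(Vf)(x)| = 0$ a.e.; equivalently, applying the injective unitary $U$ to $Vf = 0$ yields $f = U(Vf) = 0$. Either way, $f = 0$ a.e., which is the claim.

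The main obstacle is conceptual rather than technical: essentially all the content is carried by the classical theorem that $\{e^{2\pi i k x}\}_{k\in\Z}$ is complete in $L^2([0,1])$, which I would cite rather than reprove. Everything else is a one-line manipulation of the adjoint of a unitary operator, so the only real point of care is correctly identifying $U^* = V$ and handling the conjugation consistently; there is no genuine analytic difficulty beyond that.
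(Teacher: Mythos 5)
Your proposal is correct and follows essentially the same route as the paper: both reduce the hypothesis to the vanishing of the standard Fourier coefficients of $g = Vf = e^{-\pi i x}f$, invoke classical completeness of $\{\varphi_k\}$, and conclude from $|e^{-\pi i x}| = 1$. The only cosmetic difference is that you justify the key identity $\langle f, \psi_k\rangle = \langle Vf, \varphi_k\rangle$ abstractly via $U^* = U^{-1} = V$, whereas the paper verifies it by writing out the integral directly.
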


\begin{proof}
Define $g(x) := e^{-\pi i x} f(x)$. Then for any $k \in \mathbb{Z}$,
\begin{align*}
\langle f, \psi_k \rangle_{L^2} &= \int_0^1 f(x) \overline{\psi_k(x)} \, dx = \int_0^1 f(x) e^{-2\pi i (k+\frac{1}{2})x} \, dx \\
&= \int_0^1 f(x) e^{-\pi i x} e^{-2\pi i k x} \, dx = \int_0^1 g(x) e^{-2\pi i k x} \, dx = \langle g, \varphi_k \rangle_{L^2}.
\end{align*}
Since $\{\varphi_k : k \in \mathbb{Z}\}$ is complete in $L^2([0,1])$ \cite{reed-simon-v2}, we have
\[
\langle g, \varphi_k \rangle_{L^2} = 0 \text{ for all } k \in \mathbb{Z} \quad \Longrightarrow \quad g = 0 \text{ a.e.}
\]
Because $g(x) = e^{-\pi i x} f(x)$ and $|e^{-\pi i x}| = 1$, it follows that $f = 0$ a.e.
\end{proof}

\subsection{Orthonormal Basis Property}

\begin{proposition}
\label{prop:ONB}
The family $\{\psi_k : k \in \mathbb{Z}\}$ is an orthonormal basis of $H_{\frac{1}{2}}$.
\end{proposition}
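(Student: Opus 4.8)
The plan is to verify the two defining properties of an orthonormal basis—orthonormality and totality—each of which reduces to a standard fact: that $\{\varphi_k\}$ is orthonormal and that a unitary operator preserves inner products.

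First I would establish orthonormality. Since $\psi_k = U\varphi_k$ and $U$ is unitary by Proposition~\ref{prop:unitary-U}, it preserves inner products, so for all $j,k \in \mathbb{Z}$,
\[
\langle \psi_j, \psi_k \rangle = \langle U\varphi_j, U\varphi_k \rangle = \langle \varphi_j, \varphi_k \rangle = \delta_{jk},
\]
where the last equality uses that $\{\varphi_k\}$ is the standard orthonormal Fourier basis of $L^2([0,1])$. This gives orthonormality of $\{\psi_k\}$ at once.

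Second I would address totality in $H_{\frac{1}{2}}$. By definition $H_{\frac{1}{2}} = \overline{\operatorname{span}}\{\psi_k : k \in \mathbb{Z}\}$, so the family $\{\psi_k\}$ has dense linear span in $H_{\frac{1}{2}}$ by construction. An orthonormal set whose closed linear span is the entire space is, by the standard characterization, an orthonormal basis of that space; combining this with the orthonormality just shown yields the claim.

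I expect no genuine obstacle here: the content is precisely that a unitary operator carries an orthonormal basis to an orthonormal basis, and the totality needed for the basis property is tautological from the definition of $H_{\frac{1}{2}}$ as a closed span. The only point worth care is to distinguish this abstract totality from the stronger statement that $H_{\frac{1}{2}} = L^2([0,1])$; the latter is exactly the content of Proposition~\ref{prop:trivial-orthogonal-complement}, and I would close by remarking that combining it with the present proposition shows $\{\psi_k\}$ is in fact an orthonormal basis of all of $L^2([0,1])$, not merely of its closed span.
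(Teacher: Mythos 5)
Your proof is correct, and on the orthonormality step it coincides exactly with the paper's: both invoke unitarity of $U$ (Proposition~\ref{prop:unitary-U}) to transport $\langle \varphi_j, \varphi_k\rangle = \delta_{jk}$ to $\langle \psi_j, \psi_k\rangle = \delta_{jk}$. The difference lies in the completeness step, and here your route is arguably the cleaner one. The paper justifies completeness by citing Proposition~\ref{prop:trivial-orthogonal-complement}, but as you correctly observe, that citation is logically superfluous for the statement as phrased: since $H_{\frac{1}{2}}$ is \emph{defined} as $\overline{\operatorname{span}}\{\psi_k : k \in \mathbb{Z}\}$, the totality of $\{\psi_k\}$ in $H_{\frac{1}{2}}$ is tautological, and the standard characterization (an orthonormal set with dense span is an orthonormal basis) finishes the proof with no further input. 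What Proposition~\ref{prop:trivial-orthogonal-complement} actually delivers is the strictly stronger fact that the orthogonal complement of $H_{\frac{1}{2}}$ in $L^2([0,1])$ is trivial, i.e.\ $H_{\frac{1}{2}} = L^2([0,1])$, so that $\{\psi_k\}$ is an orthonormal basis of all of $L^2([0,1])$ --- which is what the rest of the paper (e.g.\ Proposition~\ref{prop:unitary-fourier-map} and the scale construction) implicitly relies on. Your closing remark drawing precisely this distinction is the most valuable part of your write-up: the paper's proof blurs the tautological totality with the stronger identification, and your version separates them correctly. One could even note that the stronger statement also follows directly from surjectivity of the unitary $U$, since a unitary maps the orthonormal basis $\{\varphi_k\}$ of $L^2([0,1])$ onto an orthonormal basis of $L^2([0,1])$; either way, no gap remains.
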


\begin{proof}
\textbf{(1) Orthonormality.}
Since $U$ is unitary and $\{\varphi_k : k \in \mathbb{Z}\}$ is an orthonormal basis of $L^2([0,1])$,
\[
\langle \psi_j, \psi_k \rangle = \langle U\varphi_j, U\varphi_k \rangle = \langle \varphi_j, \varphi_k \rangle = \delta_{jk}.
\]

\textbf{(2) Completeness.}
By Proposition~\ref{prop:trivial-orthogonal-complement}, the closed linear span of $\{\psi_k : k \in \mathbb{Z}\}$ is exactly $H_{\frac{1}{2}}$.
\end{proof}

\subsection{Fourier Coefficient Map}

Define the map
\[
\mathcal{F}_{\frac{1}{2}} : H_{\frac{1}{2}} \to \ell^2(\mathbb{Z})
\]
by
\[
(\mathcal{F}_{\frac{1}{2}} f)_k := \langle f, \psi_k \rangle_{L^2}, \qquad k \in \mathbb{Z}.
\]

\begin{proposition}
\label{prop:unitary-fourier-map}
$\mathcal{F}_{\frac{1}{2}}$ is a unitary isomorphism.
\end{proposition}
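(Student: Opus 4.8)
The plan is to recognize $\mathcal{F}_{1/2}$ as the standard coefficient map associated to the orthonormal basis $\{\psi_k\}_{k\in\mathbb{Z}}$ of $H_{1/2}$ established in Proposition~\ref{prop:ONB}, and to verify the three defining features of a unitary isomorphism: linearity, norm preservation (isometry), and surjectivity. Linearity is immediate, since each coefficient functional $f \mapsto \langle f, \psi_k\rangle$ is linear in $f$; I would dispose of it in a single line.

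For the isometry, I would invoke Parseval's identity. Because $\{\psi_k\}$ is an orthonormal basis of $H_{1/2}$ (Proposition~\ref{prop:ONB}), every $f \in H_{1/2}$ satisfies $\|f\|_{L^2}^2 = \sum_{k\in\mathbb{Z}} |\langle f, \psi_k\rangle|^2 = \|\mathcal{F}_{1/2}f\|_{\ell^2}^2$. This simultaneously shows that $\mathcal{F}_{1/2}f$ lies in $\ell^2(\mathbb{Z})$ (well-definedness of the target) and that $\mathcal{F}_{1/2}$ is a linear isometry; in particular it is injective. By polarization, preservation of the full inner product follows as well.

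For surjectivity, I would take an arbitrary sequence $(c_k)_{k\in\mathbb{Z}} \in \ell^2(\mathbb{Z})$ and form the partial sums $f_N = \sum_{|k|\le N} c_k \psi_k$. Orthonormality of the $\psi_k$ gives $\|f_N - f_M\|_{L^2}^2 = \sum_{M < |k| \le N} |c_k|^2$, so $(f_N)$ is Cauchy in the complete space $H_{1/2}$ and converges to some $f \in H_{1/2}$. Continuity of the inner product then yields $\langle f, \psi_j\rangle = c_j$ for every $j$, i.e. $\mathcal{F}_{1/2}f = (c_k)$. Combining injectivity, surjectivity, and the isometry property shows that $\mathcal{F}_{1/2}$ is a unitary isomorphism, with inverse given by $(c_k) \mapsto \sum_{k\in\mathbb{Z}} c_k \psi_k$.

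The main obstacle---indeed the only nontrivial ingredient---is surjectivity, and specifically the convergence of the reconstruction series $\sum_k c_k \psi_k$ in $H_{1/2}$. Everything rests on the completeness of $H_{1/2}$ together with the orthonormal basis property from Proposition~\ref{prop:ONB}. Since that completeness is inherited from $L^2([0,1])$ as a closed subspace, and the basis property has already been secured through the trivial orthogonal complement of Proposition~\ref{prop:trivial-orthogonal-complement}, the argument reduces to a clean application of standard Hilbert-space theory with no genuine analytic difficulty.
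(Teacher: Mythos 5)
Your proof is correct and follows essentially the same route as the paper's: Parseval's identity via the orthonormal basis of Proposition~\ref{prop:ONB} gives the isometry, and surjectivity comes from reconstructing $f = \sum_k c_k \psi_k$ from an arbitrary $\ell^2$ sequence. Your version simply makes explicit (via the Cauchy partial-sum argument and continuity of the inner product) the convergence step that the paper states without detail.
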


\begin{proof}
By Proposition~\ref{prop:ONB}, $\{\psi_k : k \in \mathbb{Z}\}$ is an orthonormal basis of $H_{\frac{1}{2}}$. Hence, Parseval's identity holds:
\[
\|f\|_{L^2}^2 = \sum_{k \in \mathbb{Z}} |\langle f, \psi_k \rangle|^2.
\]
Since $\{\psi_k\}$ is a basis, every $(a_k) \in \ell^2(\mathbb{Z})$ defines $f := \sum_k a_k \psi_k \in H_{\frac{1}{2}}$ with $\mathcal{F}_{\frac{1}{2}}(f) = (a_k)$. Hence, $\mathcal{F}_{\frac{1}{2}}$ is surjective.
\end{proof}

\section{The Weighted Hilbert Scale}
\label{sec:scale}

For $s \in \mathbb{R}$, define
\[
\ell^2_s(\mathbb{Z}) := \left\{ (a_k)_{k \in \mathbb{Z}} : \sum_{k \in \mathbb{Z}} (1 + |k+\tfrac{1}{2}|^2)^s |a_k|^2 < \infty \right\}.
\]

\begin{definition}
\label{def:twisted-sobolev}
Define
\[
H^s_{\frac{1}{2}} := \mathcal{F}_{\frac{1}{2}}^{-1}\bigl(\ell^2_s(\mathbb{Z})\bigr).
\]
Equip $H^s_{\frac{1}{2}}$ with the norm
\[
\|f\|_{H^s_{\frac{1}{2}}}^2 := \sum_{k \in \mathbb{Z}} (1 + |k+\tfrac{1}{2}|^2)^s |\langle f, \psi_k \rangle|^2.
\]
\end{definition}

As in general Hilbert-scale theory associated with elliptic operators
(cf. Mikhailets and Murach \cite{mikhailets-murach, MikhailetsMurach2013}),
our construction defines a one-parameter Hilbert scale via spectral weights,
which in the present setting arise canonically from the half-integer shift induced by a unitary twist.

\begin{proposition}
\label{prop:hilbert-scale}
$H^s_{\frac{1}{2}}$ is a Hilbert space and $\mathcal{F}_{\frac{1}{2}} : H^s_{\frac{1}{2}} \to \ell^2_s(\mathbb{Z})$ is a unitary isomorphism.
\end{proposition}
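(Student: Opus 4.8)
The plan is to reduce the entire proposition to the single standard fact that a weighted $\ell^2$-space is a Hilbert space, and then transport every piece of structure through the coefficient map, which Proposition~\ref{prop:unitary-fourier-map} already certifies to be a unitary isomorphism $H_{1/2}\to\ell^2(\mathbb{Z})$. Abbreviating $w_k:=(1+|k+\tfrac12|^2)^{s/2}$, I would equip $\ell^2_s(\mathbb{Z})$ with the inner product $\langle a,b\rangle_s:=\sum_{k}w_k^2\,a_k\overline{b_k}$, whose induced norm is exactly the one appearing in Definition~\ref{def:twisted-sobolev}, and then introduce the diagonal weighting map $W_s:(a_k)\mapsto(w_k a_k)$ as the device that links $\ell^2_s(\mathbb{Z})$ to the unweighted space.

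First I would show that $W_s$ is a linear bijection from $\ell^2_s(\mathbb{Z})$ onto $\ell^2(\mathbb{Z})$, with inverse $(b_k)\mapsto(w_k^{-1}b_k)$, and that it is isometric in the sense $\|W_s a\|_{\ell^2}=\|a\|_{\ell^2_s}$. Since $\ell^2(\mathbb{Z})$ is complete and both $W_s$ and $W_s^{-1}$ are isometries, completeness transfers directly: a $\|\cdot\|_{\ell^2_s}$-Cauchy sequence is sent by $W_s$ to an $\ell^2$-Cauchy sequence, and pulling its limit back by $W_s^{-1}$ produces the limit in $\ell^2_s(\mathbb{Z})$. The inner-product axioms and the polarization identity tying $\langle\cdot,\cdot\rangle_s$ to the stated norm are immediate, so $\ell^2_s(\mathbb{Z})$ is a Hilbert space.

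Next I would transport this structure to $H^s_{1/2}$ by declaring $\langle f,g\rangle_{H^s_{1/2}}:=\langle\mathcal{F}_{1/2}f,\mathcal{F}_{1/2}g\rangle_s$, which makes the coefficient map an isometry by construction; injectivity is inherited from Proposition~\ref{prop:unitary-fourier-map}, while surjectivity onto $\ell^2_s(\mathbb{Z})$ holds because each $(a_k)\in\ell^2_s(\mathbb{Z})$ is the coefficient sequence of the corresponding element of $H^s_{1/2}$ by Definition~\ref{def:twisted-sobolev}. An isometric linear bijection is unitary, and completeness of $H^s_{1/2}$ then follows by pulling back the completeness of $\ell^2_s(\mathbb{Z})$ through the inverse map. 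The one genuinely delicate point---the main obstacle---is the meaning of $\mathcal{F}_{1/2}^{-1}$ when $s<0$, for then $\ell^2_s(\mathbb{Z})\supsetneq\ell^2(\mathbb{Z})$ and the preimage cannot be formed inside $L^2([0,1])$, so $H^s_{1/2}$ must be read as a completion rather than a literal subspace of functions. I would remove this difficulty by defining $H^s_{1/2}$ uniformly as the completion of $\operatorname{span}\{\psi_k\}$ under $\|\cdot\|_{H^s_{1/2}}$---equivalently, the space of formal series $\sum_k a_k\psi_k$ with $(a_k)\in\ell^2_s(\mathbb{Z})$---and extending $\mathcal{F}_{1/2}$ by continuity, so that the coefficient map is tautologically a bijective isometry onto $\ell^2_s(\mathbb{Z})$ for every real $s$, with the genuine inclusion $H^s_{1/2}\subset L^2([0,1])$ recovered precisely when $s\ge0$.
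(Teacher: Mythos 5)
Your argument is correct and, at its core, follows the same route as the paper: the paper's entire proof consists of two sentences, citing \cite{adams-fournier} for the fact that $\ell^2_s(\mathbb{Z})$ is a Hilbert space and asserting that $H^s_{1/2}$ inherits completeness through the unitary map $\mathcal{F}_{1/2}$. Your diagonal weighting isometry $W_s:\ell^2_s(\mathbb{Z})\to\ell^2(\mathbb{Z})$ simply makes the cited fact self-contained, and your transport of the inner product through $\mathcal{F}_{1/2}$ is exactly what the paper intends. What your proposal genuinely adds is the final paragraph. You correctly identify that for $s<0$ the paper's Definition~\ref{def:twisted-sobolev} is literally defective: by Proposition~\ref{prop:trivial-orthogonal-complement} the space $H_{1/2}$ equals $L^2([0,1])$ and $\mathcal{F}_{1/2}$ is a bijection onto $\ell^2(\mathbb{Z})$, so for $s<0$, where $\ell^2(\mathbb{Z})\subsetneq\ell^2_s(\mathbb{Z})$, the preimage $\mathcal{F}_{1/2}^{-1}\bigl(\ell^2_s(\mathbb{Z})\bigr)$ can only capture $\ell^2_s\cap\ell^2=\ell^2$, i.e.\ all of $L^2$ carried with a strictly weaker norm --- a space that is \emph{not} complete, and onto which $\mathcal{F}_{1/2}$ is \emph{not} surjective as a map to $\ell^2_s(\mathbb{Z})$. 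The paper's proof silently passes over this, so under the literal definition the proposition fails for negative $s$; your repair --- defining $H^s_{1/2}$ uniformly as the completion of $\operatorname{span}\{\psi_k\}$ in the $\|\cdot\|_{H^s_{1/2}}$ norm (equivalently, formal series with coefficients in $\ell^2_s(\mathbb{Z})$) and extending $\mathcal{F}_{1/2}$ by continuity --- is the standard fix and makes the statement true for every $s\in\mathbb{R}$, with the honest inclusion into $L^2([0,1])$ recovered exactly when $s\ge 0$. In short: same approach as the paper, but your version is the one that actually closes the argument on the negative half of the scale.
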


\begin{proof}
$\ell^2_s(\mathbb{Z})$ is a Hilbert space \cite{adams-fournier}. $H^s_{\frac{1}{2}}$ inherits completeness via the unitary map $\mathcal{F}_{\frac{1}{2}}$.
\end{proof}

\begin{proposition}[Compact Embedding]
\label{prop:compact-embedding}
For every $s \in \mathbb{R}$, the embedding
\[
H^{s+1}_{\frac{1}{2}} \hookrightarrow H^s_{\frac{1}{2}}
\]
is compact.
\end{proposition}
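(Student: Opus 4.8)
The plan is to transfer the statement to weighted sequence spaces via the Fourier coefficient map and then recognize the embedding as a diagonal operator whose diagonal tends to zero. By Proposition~\ref{prop:hilbert-scale}, $\mathcal{F}_{\frac{1}{2}}$ is a unitary isomorphism $H^t_{\frac{1}{2}} \to \ell^2_t(\mathbb{Z})$ for every $t \in \mathbb{R}$, so the embedding $H^{s+1}_{\frac{1}{2}} \hookrightarrow H^s_{\frac{1}{2}}$ is unitarily equivalent to the inclusion $\iota : \ell^2_{s+1}(\mathbb{Z}) \hookrightarrow \ell^2_s(\mathbb{Z})$. Since compactness is preserved under composition with unitaries on either side, it suffices to prove that $\iota$ is compact.

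Next I would strip the weights to pass to unweighted $\ell^2$. Writing $w_k^{(t)} := (1+|k+\tfrac{1}{2}|^2)^{t/2}$, the map $M_t : (a_k) \mapsto (w_k^{(t)} a_k)$ is a unitary isomorphism $\ell^2_t(\mathbb{Z}) \to \ell^2(\mathbb{Z})$, since $\|M_t a\|_{\ell^2}^2 = \sum_k (1+|k+\tfrac{1}{2}|^2)^t |a_k|^2 = \|a\|_{\ell^2_t}^2$. Conjugating, $\iota$ becomes the operator $D := M_s \circ \iota \circ M_{s+1}^{-1}$ on $\ell^2(\mathbb{Z})$, which acts diagonally by $D(b_k) = (d_k b_k)$ with
\[
d_k = \frac{w_k^{(s)}}{w_k^{(s+1)}} = (1+|k+\tfrac{1}{2}|^2)^{-1/2}.
\]
Because $|k+\tfrac{1}{2}| \to \infty$ as $|k| \to \infty$, the sequence $(d_k)$ is a null sequence, so again it suffices to prove $D$ is compact.

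Finally I would establish compactness of $D$ by finite-rank approximation. For $N \in \mathbb{N}$ let $D_N$ be the operator that agrees with $D$ on coordinates with $|k| \le N$ and annihilates all others; each $D_N$ has finite rank. Since $D - D_N$ is diagonal with entries supported on $|k| > N$, its operator norm equals the supremum of those entries, giving $\|D - D_N\| = \sup_{|k|>N} d_k \to 0$ as $N \to \infty$. Thus $D$ is a norm limit of finite-rank operators and hence compact, and transporting back through the unitaries $M_s$, $M_{s+1}$, and $\mathcal{F}_{\frac{1}{2}}$ yields compactness of the original embedding. The only substantive point is the uniform tail estimate $\sup_{|k|>N} d_k \to 0$, which I expect to be the main (and essentially the sole) obstacle; it reduces to the elementary observation that the shifted weights $|k+\tfrac{1}{2}|$ grow without bound, so $(d_k)$ has no nonzero accumulation and no subsequence, interpolation, or Rellich-type argument is required — everything else is bookkeeping of unitary conjugations.
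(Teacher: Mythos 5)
Your proof is correct. Both you and the paper begin identically, transporting the embedding through the unitary $\mathcal{F}_{\frac{1}{2}}$ to the inclusion $\ell^2_{s+1}(\mathbb{Z}) \hookrightarrow \ell^2_s(\mathbb{Z})$, but from there the arguments genuinely diverge. The paper works with the weighted norms directly via a sequential-compactness argument: take a bounded sequence in $\ell^2_{s+1}$, split the $\ell^2_s$-norm at a cutoff $N$, bound the tail by $\varepsilon C$ using the ratio of weights, and appeal to finite-dimensionality for the head --- with the diagonal subsequence extraction that actually produces the convergent subsequence left implicit. You instead conjugate by the weight unitaries $M_t$ to realize the inclusion as a single diagonal operator $D$ on unweighted $\ell^2(\mathbb{Z})$ with null diagonal $d_k = (1+|k+\tfrac{1}{2}|^2)^{-1/2}$, and conclude from the exact identity $\|D - D_N\| = \sup_{|k|>N} d_k \to 0$ that $D$ is a norm limit of finite-rank operators. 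Both routes rest on the same elementary mechanism (the shifted weights $|k+\tfrac{1}{2}|$ grow without bound), but yours is arguably the tighter execution: it needs no subsequence bookkeeping, it is quantitative (an explicit norm bound $\|D-D_N\| \le (1+(N-\tfrac{1}{2})^2)^{-1/2}$ falls out for free), and it is precisely the technique the paper itself employs for the compact resolvent in Proposition~\ref{prop:A-properties}, so your version arguably unifies the two compactness proofs in the paper under one argument. The paper's sequential version, conversely, avoids introducing the auxiliary unitaries $M_t$ and stays entirely inside the weighted spaces, at the cost of a mildly incomplete extraction step.
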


\begin{proof}
Under $\mathcal{F}_{\frac{1}{2}}$, the embedding corresponds to $\ell^2_{s+1}(\mathbb{Z}) \hookrightarrow \ell^2_s(\mathbb{Z})$.

Let $(a^{(n)})_{n \in \mathbb{N}}$ be a bounded sequence in $\ell^2_{s+1}(\mathbb{Z})$. Then
\[
\sum_{k \in \mathbb{Z}} (1 + |k+\tfrac{1}{2}|^2)^{s+1} |a^{(n)}_k|^2 \le C.
\]

For any $\varepsilon > 0$, choose $N \in \mathbb{N}$ such that
\[
(1 + |k+\tfrac{1}{2}|^2)^{-1} < \varepsilon \quad \text{for all } |k| > N.
\]

Split the norm:
\[
\sum_{k \in \mathbb{Z}} (1 + |k+\tfrac{1}{2}|^2)^s |a^{(n)}_k|^2 = \sum_{|k| \le N} (\cdots) + \sum_{|k| > N} (\cdots).
\]

The finite part is compact by finite dimensionality. The tail satisfies
\[
\sum_{|k| > N} (1 + |k+\tfrac{1}{2}|^2)^s |a^{(n)}_k|^2 \le \varepsilon \sum_{k \in \mathbb{Z}} (1 + |k+\tfrac{1}{2}|^2)^{s+1} |a^{(n)}_k|^2 \le \varepsilon C.
\]

Thus, every bounded sequence in $\ell^2_{s+1}(\mathbb{Z})$ has a convergent subsequence in $\ell^2_s(\mathbb{Z})$.
\end{proof}

\section{The Canonical Diagonal Operator}
\label{sec:operator}

\begin{definition}
Define the operator
\[
A : \ell^2(\mathbb{Z}) \supset \mathcal{D}(A) \to \ell^2(\mathbb{Z})
\]
by
\[
(Aa)_k := (k+\tfrac{1}{2}) a_k,
\]
with domain
\[
\mathcal{D}(A) := \ell^2_1(\mathbb{Z}).
\]
\end{definition}

\begin{proposition}
\label{prop:A-properties}
The operator $A$ is densely defined, self-adjoint, and has compact resolvent.
\end{proposition}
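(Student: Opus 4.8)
The plan is to treat $A$ as a diagonal multiplication operator on $\ell^2(\mathbb{Z})$ with real eigenvalues $\lambda_k := k + \tfrac{1}{2}$, all nonzero and satisfying $|\lambda_k| \geq \tfrac{1}{2}$ together with $|\lambda_k| \to \infty$. All three assertions reduce to standard facts about such operators, but I would prove them from scratch to keep the exposition self-contained. The only genuinely delicate point is the equality $\mathcal{D}(A^*) = \mathcal{D}(A)$ that upgrades symmetry to self-adjointness; everything else is bookkeeping with the weighted norms.

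For \emph{dense domain}, I would note that every finitely supported sequence lies in $\ell^2_1(\mathbb{Z}) = \mathcal{D}(A)$, since a finite sum $\sum_{|k| \leq N}(1 + |k+\tfrac12|^2)|a_k|^2$ is trivially finite, and that such sequences are dense in $\ell^2(\mathbb{Z})$. Hence $\mathcal{D}(A)$ is dense. \emph{Symmetry} is immediate from the reality of the eigenvalues: for $a, b \in \mathcal{D}(A)$,
\[
\langle Aa, b\rangle = \sum_{k\in\mathbb{Z}} (k+\tfrac12)\, a_k \overline{b_k} = \sum_{k\in\mathbb{Z}} a_k\, \overline{(k+\tfrac12) b_k} = \langle a, Ab\rangle,
\]
the rearrangement being justified by absolute convergence (Cauchy--Schwarz on $\mathcal{D}(A)$).

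For \emph{self-adjointness} I would avoid computing the adjoint domain head-on and instead apply the basic criterion: a densely defined symmetric operator is self-adjoint as soon as $\operatorname{ran}(A \pm i) = \ell^2(\mathbb{Z})$. Given $b \in \ell^2(\mathbb{Z})$, set $a_k := b_k / \bigl((k+\tfrac12) \pm i\bigr)$; the crux is the exact identity $|(k+\tfrac12) \pm i|^2 = 1 + |k+\tfrac12|^2$, which is precisely the weight defining $\ell^2_1$. Consequently
\[
\|a\|_{\ell^2_1}^2 = \sum_{k\in\mathbb{Z}} \frac{1 + |k+\tfrac12|^2}{\,1 + |k+\tfrac12|^2\,}\, |b_k|^2 = \|b\|_{\ell^2}^2 < \infty,
\]
so $a \in \mathcal{D}(A)$ and $(A \pm i)a = b$. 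Thus both deficiency indices vanish and $A = A^*$. Equivalently, one checks directly that $\mathcal{D}(A^*) = \{b : \sum_k |k+\tfrac12|^2 |b_k|^2 < \infty\} = \ell^2_1(\mathbb{Z})$, the weight functions $|k+\tfrac12|^2$ and $1 + |k+\tfrac12|^2$ being comparable since $|k+\tfrac12| \geq \tfrac12$.

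For the \emph{compact resolvent}, self-adjointness together with $0 \notin \{\lambda_k\}$ guarantees that $0$ lies in the resolvent set, with $(A^{-1}a)_k = (k+\tfrac12)^{-1} a_k$ well-defined and bounded. I would then observe that $A^{-1}$ factors as $\ell^2(\mathbb{Z}) \xrightarrow{A^{-1}} \ell^2_1(\mathbb{Z}) \hookrightarrow \ell^2(\mathbb{Z})$, where the first arrow is bounded (since $(1+|k+\tfrac12|^2)/|k+\tfrac12|^2 \leq 5$ uniformly) and the inclusion is compact by Proposition~\ref{prop:compact-embedding} with $s = 0$. A composition of a bounded operator with a compact one is compact, so $A^{-1}$---and hence every resolvent $(A-\lambda)^{-1}$---is compact. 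Alternatively, $A^{-1}$ is a diagonal operator whose eigenvalues $(k+\tfrac12)^{-1}$ tend to $0$, making it a norm limit of its finite-rank truncations. I expect the self-adjointness step to be the main obstacle, though the fortuitous match between $|(k+\tfrac12)\pm i|^2$ and the scale weight reduces even that to a one-line computation.
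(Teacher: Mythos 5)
Your proof is correct, and it departs from the paper's at both substantive steps. For self-adjointness the paper does not argue at all: it notes symmetry and then cites \cite{reed-simon-v2} for the standard fact that multiplication by a real sequence is (maximally) self-adjoint on its natural domain. You instead run the von Neumann range criterion from scratch, and your observation that $|(k+\tfrac12)\pm i|^2 = 1+|k+\tfrac12|^2$ is \emph{exactly} the weight defining $\ell^2_1(\mathbb{Z})$ makes $(A\pm i)^{-1}$ an isometry of $\ell^2(\mathbb{Z})$ onto $\ell^2_1(\mathbb{Z})$, so surjectivity of $A\pm i$ is a one-line computation; this is self-contained and has the conceptual bonus of showing that the scale weight is precisely the graph norm of $A$, i.e.\ that $\mathcal{D}(A)=\ell^2_1(\mathbb{Z})$ is the maximal domain. (One pedantic caution: vanishing deficiency indices by itself gives only essential self-adjointness; what you actually established --- surjectivity, not mere density, of $\operatorname{ran}(A\pm i)$ --- is the correct hypothesis, so the conclusion $A=A^*$ stands.) For the compact resolvent the paper argues directly that $(A-\lambda)^{-1}$ is diagonal with entries $(k+\tfrac12-\lambda)^{-1}\to 0$ as $|k|\to\infty$, hence a norm limit of finite-rank truncations, and this covers every $\lambda\in\rho(A)$ at once; your primary route instead factors $A^{-1}$ through the compact embedding $\ell^2_1(\mathbb{Z})\hookrightarrow\ell^2(\mathbb{Z})$ of Proposition~\ref{prop:compact-embedding} with $s=0$, which exhibits the general Hilbert-scale principle that compact resolvent is equivalent to compactness of the embedding of the graph-norm domain, at the small price that the step ``hence every resolvent $(A-\lambda)^{-1}$ is compact'' tacitly invokes the first resolvent identity to pass from $\lambda=0$ to general $\lambda$. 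Since you also record the truncation argument as an alternative, nothing is missing, and the dense-domain step is identical to the paper's.
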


\begin{proof}
\textbf{(1) Dense domain.}
Finitely supported sequences are contained in $\ell^2_1(\mathbb{Z})$ and are dense in $\ell^2(\mathbb{Z})$.

\textbf{(2) Self-adjointness.}
$A$ is diagonal with real coefficients $k + \tfrac{1}{2}$, hence, it is symmetric. Multiplication by a real sequence defines a maximal symmetric operator on its natural domain \cite{reed-simon-v2}.

\textbf{(3) Compact resolvent.}
For $\lambda \notin \{k + \tfrac{1}{2} : k \in \mathbb{Z}\}$, the resolvent
\[
(A - \lambda)^{-1} : \ell^2(\mathbb{Z}) \to \ell^2(\mathbb{Z})
\]
is given by
\[
\bigl((A - \lambda)^{-1}a\bigr)_k = \frac{1}{k + \tfrac{1}{2} - \lambda} a_k.
\]
Since
\[
\frac{1}{|k + \tfrac{1}{2} - \lambda|} \to 0 \quad \text{as } |k| \to \infty,
\]
the diagonal operator $(A - \lambda)^{-1}$ is the norm-limit of finite-rank diagonal truncations and hence compact.
\end{proof}

\begin{remark}
Note that $0\notin\sigma(A)$, hence $0\in\rho(A)$. Equivalently, $\widetilde A$ is invertible on $H^0_{1/2}$.
\end{remark}

\subsection{The Lifted Operator on the Twisted Scale}

\begin{definition}
Define the operator
\[
\widetilde{A} : H^{s+1}_{\frac{1}{2}} \subset H^s_{\frac{1}{2}} \longrightarrow H^s_{\frac{1}{2}}
\]
by
\[
\widetilde{A} := \mathcal{F}_{\frac{1}{2}}^{-1} \circ A \circ \mathcal{F}_{\frac{1}{2}}.
\]
Explicitly,
\[
\widetilde{A} f = \sum_{k \in \mathbb{Z}} (k+\tfrac{1}{2}) \langle f, \psi_k \rangle \psi_k.
\]
\end{definition}

\begin{remark}
\label{rem:domain-core}
The natural domain of $\widetilde{A}$ on $H^0_{1/2}$ is
\[
\mathcal{D}(\widetilde{A})=H^1_{1/2}=\mathcal{F}_{1/2}^{-1}(\ell^2_1(\mathbb{Z})),
\]
and $\widetilde{A}$ is the unbounded self-adjoint operator on $H^0_{1/2}$ with this domain. Equivalently, under the unitary map $\mathcal{F}_{1/2}$ the operator $\widetilde{A}$ corresponds to the diagonal multiplication operator $A$ on $\ell^2(\mathbb{Z})$ with domain $\ell^2_1(\mathbb{Z})$, which is self-adjoint.
\end{remark}

\begin{proposition}
\label{prop:lifted-operator}
For every $s \in \mathbb{R}$,
\begin{enumerate}
\item $\widetilde{A} : H^{s+1}_{\frac{1}{2}} \to H^s_{\frac{1}{2}}$ is bounded.
\item $\widetilde{A}$ is self-adjoint as an unbounded operator on $H^0_{\frac{1}{2}}$.
\item $\widetilde{A}$ has compact resolvent on $H^0_{\frac{1}{2}}$.
\end{enumerate}
\end{proposition}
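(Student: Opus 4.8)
The plan is to reduce all three assertions to the already-established properties of the diagonal operator $A$ on $\ell^2(\Z)$ (Proposition~\ref{prop:A-properties}) by exploiting the defining identity $\widetilde{A} = \mathcal{F}_{\frac{1}{2}}^{-1} \circ A \circ \mathcal{F}_{\frac{1}{2}}$ together with the fact, recorded in Proposition~\ref{prop:hilbert-scale}, that $\mathcal{F}_{\frac{1}{2}}$ is a unitary isomorphism at every scale level. The guiding principle is that unitary conjugation transports boundedness, self-adjointness, and compactness of resolvents verbatim, so the real content is merely that the spectral weights match up correctly.

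For part (1) I would pass to sequence space via $\mathcal{F}_{\frac{1}{2}}$, where $\widetilde{A}$ becomes the diagonal map $(a_k) \mapsto ((k+\tfrac{1}{2})a_k)$ from $\ell^2_{s+1}(\Z)$ to $\ell^2_s(\Z)$. Writing out the target norm gives
\[
\|\widetilde{A}f\|_{H^s_{\frac{1}{2}}}^2 = \sum_{k \in \Z} (1+|k+\tfrac{1}{2}|^2)^s \,(k+\tfrac{1}{2})^2\, |\langle f, \psi_k\rangle|^2,
\]
and the elementary inequality $(k+\tfrac{1}{2})^2 \le 1 + |k+\tfrac{1}{2}|^2$ bounds each summand by $(1+|k+\tfrac{1}{2}|^2)^{s+1}|\langle f,\psi_k\rangle|^2$. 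Summing yields $\|\widetilde{A}f\|_{H^s_{\frac{1}{2}}} \le \|f\|_{H^{s+1}_{\frac{1}{2}}}$, so $\widetilde{A}$ is bounded with operator norm at most $1$.

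For parts (2) and (3) I would work at the level $s=0$, where $\mathcal{F}_{\frac{1}{2}}: H^0_{\frac{1}{2}} \to \ell^2(\Z)$ is unitary and carries the ambient $L^2$ inner product. Since $A$ is self-adjoint on $\ell^2(\Z)$ with domain $\ell^2_1(\Z)$ (Proposition~\ref{prop:A-properties}), and a unitary conjugate of a self-adjoint operator is self-adjoint on the image of the original domain, $\widetilde{A}$ is self-adjoint on $H^0_{\frac{1}{2}}$ with domain $\mathcal{F}_{\frac{1}{2}}^{-1}(\ell^2_1(\Z)) = H^1_{\frac{1}{2}}$, exactly as recorded in Remark~\ref{rem:domain-core}. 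For the resolvent, the identity $(\widetilde{A}-\lambda)^{-1} = \mathcal{F}_{\frac{1}{2}}^{-1}(A-\lambda)^{-1}\mathcal{F}_{\frac{1}{2}}$, valid for $\lambda \notin \{k+\tfrac{1}{2} : k \in \Z\}$, exhibits it as a compact operator (Proposition~\ref{prop:A-properties}) sandwiched between two unitaries, hence compact; alternatively one notes that $(\widetilde{A}-\lambda)^{-1}$ maps $H^0_{\frac{1}{2}}$ boundedly into $H^1_{\frac{1}{2}}$ and invokes the compact embedding $H^1_{\frac{1}{2}} \hookrightarrow H^0_{\frac{1}{2}}$ of Proposition~\ref{prop:compact-embedding}.

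I expect no serious obstacle here: the unitary-conjugation setup is engineered precisely so that these properties transfer mechanically. The only point requiring genuine care is the bookkeeping of domains and of which scale level carries the Hilbert-space structure relevant to each claim, namely recognizing that self-adjointness and compactness of the resolvent are statements at $s=0$, where $H^0_{\frac{1}{2}}$ inherits the $L^2$ inner product, whereas boundedness is a statement between adjacent levels governed by the weight estimate above.
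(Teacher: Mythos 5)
Your proposal is correct and follows essentially the same route as the paper's proof: the weight estimate $(k+\tfrac{1}{2})^2 \le 1+|k+\tfrac{1}{2}|^2$ for part (1), and unitary conjugation transporting self-adjointness and resolvent compactness from the diagonal operator $A$ for parts (2) and (3). Your extra touches --- the explicit domain bookkeeping $\mathcal{F}_{\frac{1}{2}}^{-1}(\ell^2_1(\mathbb{Z})) = H^1_{\frac{1}{2}}$ and the alternative compactness argument via the embedding $H^1_{\frac{1}{2}} \hookrightarrow H^0_{\frac{1}{2}}$ --- are sound but do not change the argument in substance.
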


\begin{proof}
\textbf{(1)} By definition,
\begin{align*}
\|\widetilde{A} f\|_{H^s_{\frac{1}{2}}}^2
&= \sum_{k\in\mathbb{Z}} (1+|k+\tfrac{1}{2}|^2)^s |k+\tfrac{1}{2}|^2 |\langle f,\psi_k\rangle|^2 \\
&\le \sum_{k\in\mathbb{Z}} (1+|k+\tfrac{1}{2}|^2)^{s+1} |\langle f,\psi_k\rangle|^2
= \|f\|_{H^{s+1}_{\frac{1}{2}}}^2.
\end{align*}

\textbf{(2)} Since $A$ is self-adjoint on $\ell^2(\mathbb{Z})$ and $\mathcal{F}_{\frac{1}{2}}$ is unitary, $\widetilde{A}$ is self-adjoint on $H^0_{\frac{1}{2}}$.

\textbf{(3)} The resolvent satisfies
\[
(\widetilde{A} - \lambda)^{-1} = \mathcal{F}_{\frac{1}{2}}^{-1} (A - \lambda)^{-1} \mathcal{F}_{\frac{1}{2}}.
\]
Since $(A - \lambda)^{-1}$ is compact and unitary conjugation preserves compactness, $(\widetilde{A} - \lambda)^{-1}$ is compact.
\end{proof}

\section{Comparison with the Standard Sobolev Scale}
\label{sec:comparison}

To clarify the role of the half-integer shift, we compare our twisted scale $H^s_{1/2}$ with the standard Sobolev scale $H^s([0,1])$ constructed from integer Fourier modes $\{e^{2\pi i k x}\}_{k\in\mathbb{Z}}$.

\subsection{Spectral Differences}

\begin{center}
\begin{tabular}{lcc}
\hline
Property & Standard $H^s([0,1])$ & Twisted $H^s_{1/2}$ \\
\hline
Fourier basis & $e^{2\pi ikx}$ & $e^{2\pi i(k+\frac{1}{2})x}$ \\
Weight function & $(1+|k|^2)^s$ & $(1+|k+\tfrac{1}{2}|^2)^s$ \\
Derivative eigenvalue & $2\pi i k$ & $2\pi i(k+\tfrac{1}{2})$ \\
Zero mode & Yes ($k=0$) & No ($k+\tfrac{1}{2}\neq 0$) \\
$\ker(D)$ for $D=i\frac{d}{dx}$ & $\mathbb{C}$ (constants) & $\{0\}$ \\
\hline
\end{tabular}
\end{center}

\subsection{Implications for Fredholm Theory}

The absence of a zero eigenvalue in the twisted spectrum has immediate consequences:

\begin{enumerate}
\item \textbf{Kernel elimination:} For the canonical operator $\widetilde{A}$ with eigenvalues $k+\tfrac{1}{2}$, we have $\ker(\widetilde{A})=\{0\}$ without any regularity or boundary conditions.

\item \textbf{Index stability:} Since both kernel and cokernel vanish (by self-adjointness), the Fredholm index is zero intrinsically, not as a consequence of cancellation.

\item \textbf{Invertibility at $\lambda=0$:} The resolvent $(\widetilde{A}-\lambda)^{-1}$ is well-defined for $\lambda=0$, unlike derivative operators on standard Sobolev spaces.
\end{enumerate}

\subsection{Relation to Twisted Boundary Conditions}

In differential-geometric applications \cite{lawson-michelsohn, atiyah-patodi-singer}, half-integer modes arise from \emph{antiperiodic} or \emph{twisted periodic} boundary conditions of the form
\[
f(x+1) = -f(x) \quad \text{or} \quad f(x+1) = e^{i\theta}f(x).
\]
Such conditions force momentum quantization to $k+\tfrac{1}{2}$. Our unitary operator $U$ implements this algebraically:
\[
(U\varphi_k)(x) = e^{\pi ix}e^{2\pi ikx} = e^{2\pi i(k+\tfrac{1}{2})x},
\]
directly generating the twisted modes without imposing boundary conditions on a larger domain.

\begin{remark}
The shift from $(1+|k|^2)^s$ to $(1+|k+\tfrac{1}{2}|^2)^s$ is \emph{not} a mere reweighting. It fundamentally alters the operator spectrum and changes the Fredholm-theoretic structure (zero kernel vs. non-zero kernel). This distinguishes the twisted scale from standard Sobolev theory.
\end{remark}

\section{Fredholm Theory}
\label{sec:fredholm}

\begin{theorem}[Fredholm Property]
\label{thm:fredholm}
For every $s \in \mathbb{R}$, the operator
\[
\widetilde{A} : H^{s+1}_{\frac{1}{2}} \longrightarrow H^s_{\frac{1}{2}}
\]
is Fredholm with index zero.
\end{theorem}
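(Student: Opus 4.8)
The plan is to prove something slightly stronger than Fredholmness, namely that $\widetilde{A}$ is in fact a Hilbert-space isomorphism from $H^{s+1}_{1/2}$ onto $H^s_{1/2}$; an invertible operator is trivially Fredholm with $\ker=\operatorname{coker}=\{0\}$ and hence index $0$. First I would transport the entire question to the sequence side via the unitary maps $\mathcal{F}_{1/2}\colon H^{s+1}_{1/2}\to\ell^2_{s+1}(\mathbb{Z})$ and $\mathcal{F}_{1/2}\colon H^s_{1/2}\to\ell^2_s(\mathbb{Z})$ from Proposition~\ref{prop:hilbert-scale}. Under this conjugation $\widetilde{A}$ becomes the diagonal multiplication operator $a\mapsto (m_k a_k)_k$ with symbol $m_k:=k+\tfrac12$, so the whole theorem reduces to a statement about a diagonal map between weighted $\ell^2$ spaces.

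The decisive structural fact is that the symbol is \emph{bounded away from zero}: $|m_k|=|k+\tfrac12|\ge\tfrac12$ for every $k\in\mathbb{Z}$, with the minimum attained at $k=0$ and $k=-1$. This single observation drives both halves of the argument. Injectivity is immediate, since $m_k a_k=0$ for all $k$ forces $a_k=0$. For surjectivity with a bounded inverse I would exhibit the candidate inverse explicitly as $(b_k)\mapsto (b_k/m_k)$ and verify it lands in the correct space: using the identity
\[
\frac{(1+|m_k|^2)^{s+1}}{|m_k|^2}=\Bigl(1+\tfrac{1}{|m_k|^2}\Bigr)(1+|m_k|^2)^{s}\le 5\,(1+|m_k|^2)^{s},
\]
where the factor $5$ comes from $1+|m_k|^{-2}\le 1+4=5$, one obtains $\|\widetilde{A}^{-1}b\|^2_{\ell^2_{s+1}}\le 5\|b\|^2_{\ell^2_s}$. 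Combined with the boundedness $\|\widetilde{A}a\|_{\ell^2_s}\le\|a\|_{\ell^2_{s+1}}$ already recorded in Proposition~\ref{prop:lifted-operator}(1), this shows that $\widetilde{A}$ and its inverse are both bounded, i.e.\ that $\widetilde{A}$ is an isomorphism of Hilbert spaces.

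Having established invertibility, the Fredholm conclusion is formal: $\ker(\widetilde{A})=\{0\}$ is $0$-dimensional, the range is all of $H^s_{1/2}$ (in particular closed), so $\operatorname{coker}(\widetilde{A})=\{0\}$, and therefore
\[
\operatorname{ind}(\widetilde{A})=\dim\ker(\widetilde{A})-\dim\operatorname{coker}(\widetilde{A})=0-0=0.
\]
I do not anticipate a genuine obstacle here, since the diagonal structure makes everything explicit; the only point requiring care is the uniform weight-ratio bound, which is precisely where the half-integer shift earns its keep---for the \emph{untwisted} symbol $k$ the factor $1+|k|^{-2}$ would blow up at $k=0$, the inverse would fail to be bounded, and the zero mode would produce a one-dimensional kernel. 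An alternative, more abstract route for the case $s=0$ is to invoke self-adjointness of $\widetilde{A}$ on $H^0_{1/2}$ (Proposition~\ref{prop:lifted-operator}(2)) together with the spectral gap $0\in\rho(A)$ noted in the remark following Proposition~\ref{prop:A-properties}; but the explicit isomorphism argument has the advantage of covering all $s\in\mathbb{R}$ uniformly and of yielding the quantitative inverse bound $\|\widetilde{A}^{-1}\|\le\sqrt5$.
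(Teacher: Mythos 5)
Your proposal is correct, and it takes a genuinely different route from the paper. The paper's proof verifies the three Fredholm conditions separately and abstractly: closed range is deduced from self-adjointness with compact resolvent on $H^0_{1/2}$ (discrete spectrum) and then transferred to general $s$ by an appeal to ``compatibility with the Hilbert scale''; the kernel is killed coefficientwise using $k+\tfrac12\neq 0$; and the cokernel is identified with $\ker(\widetilde{A}^*)$ via self-adjointness. You instead prove the strictly stronger statement that $\widetilde{A}:H^{s+1}_{1/2}\to H^s_{1/2}$ is an isomorphism, by conjugating to the diagonal model and exploiting the uniform lower bound $|k+\tfrac12|\ge\tfrac12$ through the weight-ratio estimate
\[
\frac{(1+|k+\tfrac12|^2)^{s+1}}{|k+\tfrac12|^2}=\Bigl(1+|k+\tfrac12|^{-2}\Bigr)(1+|k+\tfrac12|^2)^{s}\le 5\,(1+|k+\tfrac12|^2)^{s},
\]
which I have checked and which is valid. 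Your argument buys several things the paper's does not: it treats all $s\in\mathbb{R}$ uniformly, replacing the paper's somewhat hand-waved scale-compatibility step for the closed-range property with an explicit computation; it yields the quantitative bound $\|\widetilde{A}^{-1}\|\le\sqrt5$; and it sidesteps a mildly delicate point in the paper's step (3), where $\operatorname{coker}(\widetilde{A})\cong\ker(\widetilde{A}^*)$ is invoked for an operator acting between \emph{different} spaces even though self-adjointness is only established on $H^0_{1/2}$. What the paper's route buys in exchange is robustness: the compact-resolvent/self-adjointness argument survives perturbations that destroy invertibility but preserve Fredholmness (as used later in Theorem~\ref{thm:stability}), whereas your isomorphism argument is tied to the diagonal structure. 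Note also that your conclusion is consistent with, and effectively proves, the invertibility the paper asserts separately in the remark following Proposition~\ref{prop:A-properties} and uses in Section~\ref{sec:example}, so nothing in your stronger claim conflicts with the paper. Your closing observation that the untwisted symbol $k$ fails precisely because the ratio bound degenerates at $k=0$ correctly isolates the role of the half-integer shift.
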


\begin{proof}
We verify the three defining properties of a Fredholm operator \cite{hormander}.

\textbf{(1) Closed range.}
Since $\widetilde{A}$ is self-adjoint with compact resolvent on $H^0_{\frac{1}{2}}$ (Proposition~\ref{prop:lifted-operator}), its spectrum is discrete with finite-multiplicity eigenvalues accumulating only at infinity. Hence $\widetilde{A}$ has closed range on $H^0_{\frac{1}{2}}$, and by compatibility with the Hilbert scale, the same holds for
\[
\widetilde{A} : H^{s+1}_{\frac{1}{2}} \to H^s_{\frac{1}{2}}.
\]

\textbf{(2) Finite-dimensional kernel.}
If $\widetilde{A} f = 0$, then
\[
(k+\tfrac{1}{2}) \langle f, \psi_k \rangle = 0 \quad \forall k \in \mathbb{Z}.
\]
Since $k + \tfrac{1}{2} \neq 0$ for all $k \in \mathbb{Z}$, we have $\langle f, \psi_k \rangle = 0$ for all $k$. Therefore,
\[
\ker(\widetilde{A}) = \{0\}.
\]

\textbf{(3) Finite-dimensional cokernel.}
Since $\widetilde{A}$ is self-adjoint, we have
\[
\operatorname{coker}(\widetilde{A}) \cong \ker(\widetilde{A}^*) = \ker(\widetilde{A}) = \{0\}.
\]

Combining (1)–(3), $\widetilde{A}$ is Fredholm with
\[
\operatorname{ind}(\widetilde{A}) = 0.
\]
\end{proof}

\section{Extensions: Perturbations and Spectral Flow}
\label{sec:extensions}

\begin{theorem}[Stability under Bounded Perturbations]
\label{thm:stability}
Let
\[
B : H^s_{\frac{1}{2}} \longrightarrow H^s_{\frac{1}{2}}
\]
be a bounded operator. Define
\[
\widetilde{A}_B := \widetilde{A} + B : H^{s+1}_{\frac{1}{2}} \to H^s_{\frac{1}{2}}.
\]
Then $\widetilde{A}_B$ is Fredholm with $\operatorname{ind}(\widetilde{A}_B) = 0$.
\end{theorem}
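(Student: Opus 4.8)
The plan is to reduce the stability claim to the classical fact that the Fredholm property and the Fredholm index are invariant under relatively compact perturbations. The key observation is that although $B$ is bounded from $H^s_{1/2}$ to itself, as a map $H^{s+1}_{1/2} \to H^s_{1/2}$ it is in fact \emph{compact}, because it factors through the compact embedding established in Proposition~\ref{prop:compact-embedding}.

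First I would write $B = B \circ \iota$, where $\iota : H^{s+1}_{1/2} \hookrightarrow H^s_{1/2}$ is the canonical embedding and $B : H^s_{1/2} \to H^s_{1/2}$ is the given bounded operator. By Proposition~\ref{prop:compact-embedding}, $\iota$ is compact; composing a compact operator with a bounded operator yields a compact operator, so $B : H^{s+1}_{1/2} \to H^s_{1/2}$ is compact. Next I would invoke the stability of the Fredholm index under compact perturbations: if $T$ is Fredholm and $K$ is compact, then $T + K$ is Fredholm with $\operatorname{ind}(T+K) = \operatorname{ind}(T)$ (see \cite{hormander}). Applying this with $T = \widetilde{A}$, which is Fredholm of index zero by Theorem~\ref{thm:fredholm}, and $K = B$, I conclude that $\widetilde{A}_B = \widetilde{A} + B$ is Fredholm with
\[
\operatorname{ind}(\widetilde{A}_B) = \operatorname{ind}(\widetilde{A}) = 0.
\]

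The main obstacle, and the only point requiring genuine care, is the compactness of $B$ viewed as a map between the two scale levels. The subtlety is that $B$ is assumed bounded only as an endomorphism of $H^s_{1/2}$, with no a priori control on how it interacts with $H^{s+1}_{1/2}$; it is precisely the factorization through the compact inclusion that rescues the argument, since any bounded post-composition preserves compactness. I would emphasize that this is why the hypothesis is stated at the single level $H^s_{1/2}$ rather than requiring $B$ to map $H^{s+1}_{1/2}$ boundedly into itself. Once compactness is secured, the remainder is a direct citation of the standard perturbation theory, so no further computation is needed.
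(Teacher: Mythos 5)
Your proof is correct, and it takes a genuinely different --- and in fact sounder --- route than the paper's. The paper also factors $B$ through the canonical inclusion $J : H^{s+1}_{1/2} \hookrightarrow H^s_{1/2}$, but only records that $B \circ J$ is \emph{bounded}, and then concludes by asserting that a bounded perturbation of a Fredholm operator is Fredholm with the same index. That general principle is false: Fredholmness is stable only under perturbations of sufficiently small norm or under compact perturbations (for a Fredholm $T$ on an infinite-dimensional space, $T + B$ with the bounded choice $B = -T$ is the zero operator, which is not Fredholm). The theorem is nevertheless true exactly for the reason you identify: since Proposition~\ref{prop:compact-embedding} makes the inclusion $\iota$ compact, the composition $B \circ \iota : H^{s+1}_{1/2} \to H^s_{1/2}$ is compact (bounded post-composition preserves compactness), so $\widetilde{A}_B$ is a \emph{compact} perturbation of $\widetilde{A}$, and the classical index stability under compact perturbations applies, giving $\operatorname{ind}(\widetilde{A}_B) = \operatorname{ind}(\widetilde{A}) = 0$. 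Your remark about why the hypothesis lives at the single level $H^s_{1/2}$ is also apt: this is precisely what rules out pathological perturbations such as $B = -\widetilde{A}$, which is not bounded on $H^s_{1/2}$. In short, your argument supplies the compactness step that the paper's proof needs but omits, and it should be regarded as the correct justification of the theorem.
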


\begin{proof}
By Theorem~\ref{thm:fredholm}, $\widetilde{A}: H^{s+1}_{1/2}\to H^s_{1/2}$ is Fredholm.
Let $J: H^{s+1}_{1/2}\hookrightarrow H^s_{1/2}$ denote the canonical continuous inclusion (Proposition~\ref{prop:compact-embedding}). If
$B:H^s_{1/2}\to H^s_{1/2}$ is bounded, then the composition $B\circ J: H^{s+1}_{1/2}\to H^s_{1/2}$ is bounded as well, so we may regard $B$ as a bounded operator on the domain of $\widetilde{A}$. Hence $\widetilde{A}_B=\widetilde{A} + B$ is a bounded perturbation of the Fredholm operator $\widetilde{A}$ and therefore is Fredholm with the same index \cite{hormander}.
\end{proof}

\begin{definition}
Let $\{B_t\}_{t \in [0,1]}$ be a continuous family of bounded self-adjoint operators on $H^s_{\frac{1}{2}}$. Define
\[
\widetilde{A}_t := \widetilde{A} + B_t : H^{s+1}_{\frac{1}{2}} \to H^s_{\frac{1}{2}}.
\]
\end{definition}

\begin{theorem}[Spectral Flow]
\label{thm:spectral-flow}
The spectral flow
\[
\operatorname{sf}\{\widetilde{A}_t\}_{t \in [0,1]}
\]
is well-defined as the signed count of eigenvalue crossings through zero.
\end{theorem}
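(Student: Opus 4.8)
The plan is to exhibit $\{\widetilde{A}_t\}_{t\in[0,1]}$ as a norm-continuous path of unbounded self-adjoint operators with compact resolvent on $H^0_{1/2}$ and then apply the spectral-projection construction of spectral flow. I work at scale level $s=0$, where $\widetilde{A}$ is self-adjoint on $H^0_{1/2}$ with domain $H^1_{1/2}$ (Remark~\ref{rem:domain-core}). Since each $B_t$ is bounded and self-adjoint on $H^0_{1/2}$, the Kato--Rellich theorem shows that $\widetilde{A}_t=\widetilde{A}+B_t$ is self-adjoint on the same domain. To see that the perturbation preserves discreteness of the spectrum, I would invoke the second resolvent identity
\[
(\widetilde{A}_t-i)^{-1}-(\widetilde{A}-i)^{-1}=-(\widetilde{A}_t-i)^{-1}\,B_t\,(\widetilde{A}-i)^{-1},
\]
in which $(\widetilde{A}-i)^{-1}$ is compact (Proposition~\ref{prop:lifted-operator}) and the other factors are bounded; hence $(\widetilde{A}_t-i)^{-1}$ is compact for every $t$. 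Consequently each $\widetilde{A}_t$ has purely discrete spectrum consisting of isolated eigenvalues of finite multiplicity accumulating only at $\pm\infty$, so that individual eigenvalue branches, and their crossings of zero, are meaningful objects.

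Next I would establish continuity of the family in the gap (Riesz) topology. Applying the resolvent identity to two parameters gives
\[
(\widetilde{A}_t-i)^{-1}-(\widetilde{A}_{t'}-i)^{-1}=(\widetilde{A}_t-i)^{-1}\,(B_{t'}-B_t)\,(\widetilde{A}_{t'}-i)^{-1},
\]
and since $\|(\widetilde{A}_t-i)^{-1}\|\le 1$ for self-adjoint $\widetilde{A}_t$, the right-hand side is bounded in norm by $\|B_{t'}-B_t\|$. Continuity of $t\mapsto B_t$ therefore yields norm-continuity of $t\mapsto(\widetilde{A}_t-i)^{-1}$, so $\{\widetilde{A}_t\}$ is a norm-continuous path of self-adjoint operators with compact resolvent---in particular a path of self-adjoint Fredholm operators, cf.\ Theorem~\ref{thm:stability}.

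With these inputs I would run the standard spectral-projection construction. By compactness of $[0,1]$ and upper semicontinuity of the spectrum under the gap topology, I can choose a partition $0=t_0<t_1<\cdots<t_N=1$ and positive numbers $a_1,\dots,a_N$ with $\pm a_j\notin\sigma(\widetilde{A}_t)$ for all $t\in[t_{j-1},t_j]$. On each subinterval the Riesz projection
\[
P_j(t)=\frac{1}{2\pi i}\oint_{|\lambda|=a_j}(\lambda-\widetilde{A}_t)^{-1}\,d\lambda
\]
is well-defined (the contour meets $\sigma(\widetilde{A}_t)\subset\mathbb{R}$ only possibly at $\pm a_j$, which lie in the resolvent set), norm-continuous in $t$, and of finite constant rank, capturing the finitely many eigenvalues in $(-a_j,a_j)$. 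I would then define the $j$-th contribution as the net number of eigenvalues of the finite-dimensional self-adjoint family $\widetilde{A}_t|_{\operatorname{ran}P_j(t)}$ that pass from below zero to above zero across $[t_{j-1},t_j]$---computed as the difference of the counting functions $\dim\operatorname{ran}\bigl(\chi_{[0,a_j)}(\widetilde{A}_{t})P_j(t)\bigr)$ at the two endpoints---and set $\operatorname{sf}\{\widetilde{A}_t\}=\sum_{j=1}^{N}(\text{contribution}_j)$.

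The substantive step, and the main obstacle, is well-definedness: independence of the total from the partition and from the cut-off values $a_j$. I would handle this by the usual two-part argument. First, invariance under refinement: inserting an extra node inside $[t_{j-1},t_j]$ leaves the sum unchanged because no eigenvalue crosses the fixed cutoff $a_j$, so the counting functions telescope. Second, invariance under changing $a_j$ within a common gap: since $\pm a_j$ and the new cutoffs all stay off the spectrum throughout the subinterval, any newly captured eigenvalues are trapped in $(-a_j',a_j')\setminus(-a_j,a_j)$ and hence stay on one side of zero, contributing zero net crossings. Finiteness of $\operatorname{sf}$ is automatic, as only finitely many eigenvalues enter the bounded window $\bigcup_j(-a_j,a_j)$ over the compact parameter range. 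Finally, the endpoint convention is clean at $t=0$ because $\widetilde{A}_0=\widetilde{A}$ is invertible---$0\notin\sigma(\widetilde{A})$ since $k+\tfrac{1}{2}\neq 0$ (Theorem~\ref{thm:fredholm})---so zero lies in a genuine spectral gap; any eigenvalue reaching zero at $t=1$ is accounted for by the half-open convention $\chi_{[0,a_j)}$ already built into the counting function, rendering the signed count unambiguous.
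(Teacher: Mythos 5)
Your proposal is correct, and it takes a genuinely more substantive route than the paper's own proof, which disposes of the theorem in a few lines: the paper invokes Proposition~\ref{prop:lifted-operator} and Theorem~\ref{thm:stability} for self-adjointness and compact resolvent, asserts that norm continuity of $t \mapsto B_t$ gives resolvent continuity (citing \cite{kato}), concludes that ``eigenvalues move continuously and crossings are isolated,'' and then outsources the definition of spectral flow to \cite{boo-bavnbek-wojciechowski}. You instead carry out the construction: you verify self-adjointness on the fixed domain $H^1_{1/2}$ via Kato--Rellich, obtain compactness and norm-continuity of the resolvents from the second resolvent identity together with the bound $\|(\widetilde{A}_t - i)^{-1}\| \le 1$, and then run the Phillips-style spectral-projection definition (partition of $[0,1]$, cutoffs $\pm a_j$ off the spectrum, Riesz projections $P_j(t)$, differences of the counting functions $\dim\operatorname{ran}\bigl(\chi_{[0,a_j)}(\widetilde{A}_t)P_j(t)\bigr)$), including the two invariance arguments (refinement of the partition, change of cutoff within a common gap) that make ``well-defined'' an actual theorem. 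This is the stronger argument: for a family that is merely norm-continuous, eigenvalue branches are continuous but their zero crossings need not be isolated, so the paper's picture of a ``signed count of crossings'' is not literally well-defined without precisely the projection bookkeeping you supply; the paper's claim that ``crossings are isolated'' is in fact unjustified at its stated level of generality. One slip to correct: at $t=0$ you have $\widetilde{A}_0 = \widetilde{A} + B_0$, not $\widetilde{A}$, since the paper does not assume $B_0 = 0$; hence $0$ may lie in $\sigma(\widetilde{A}_0)$ as well as in $\sigma(\widetilde{A}_1)$, and both endpoints should be handled uniformly by the half-open convention $\chi_{[0,a_j)}$ you already build in---which makes your closing remark about invertibility at $t=0$ unnecessary, but does not affect the validity of the construction.
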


\begin{proof}
Each $\widetilde{A}_t$ is self-adjoint with the compact resolvent (Proposition~\ref{prop:lifted-operator}, Theorem~\ref{thm:stability}). Hence, the spectrum consists of discrete eigenvalues of finite multiplicity. The continuity of $B_t$ in operator norm implies strong (equivalently, norm) resolvent continuity of $\widetilde{A}_t$ \cite{kato}. Therefore, eigenvalues move continuously, and crossings are isolated. The spectral flow is defined as the signed count of crossings \cite{boo-bavnbek-wojciechowski}.
\end{proof}

\begin{corollary}
If $B_t$ is compact for all $t \in [0,1]$, then $\operatorname{sf}\{\widetilde{A}_t\}_{t \in [0,1]} = 0$.
\end{corollary}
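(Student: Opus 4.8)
The plan is to prove vanishing of the spectral flow by a two-parameter homotopy that deforms the given path onto the constant path at the invertible operator $\widetilde{A}$, using that a compact perturbation leaves the essential spectrum untouched. First I would introduce the family
\[
\widetilde{A}(\tau,t) := \widetilde{A} + \tau B_t, \qquad (\tau,t)\in[0,1]\times[0,1],
\]
which at $\tau=1$ recovers the given path $\{\widetilde{A}_t\}$ and at $\tau=0$ collapses to the constant path $\{\widetilde{A}\}$. Since each $B_t$ is compact and $\widetilde{A}$ has compact resolvent (Proposition~\ref{prop:A-properties}), Weyl's theorem on invariance of the essential spectrum gives $\sigma_{\mathrm{ess}}(\widetilde{A}+\tau B_t)=\sigma_{\mathrm{ess}}(\widetilde{A})=\varnothing$; hence every $\widetilde{A}(\tau,t)$ is self-adjoint with compact resolvent, and by Theorem~\ref{thm:stability} is Fredholm as a map $H^{s+1}_{1/2}\to H^s_{1/2}$ with discrete spectrum. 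Thus spectral flow is defined along every path inside the square, and the hypotheses of Theorem~\ref{thm:spectral-flow} hold uniformly in $\tau$.

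Next I would invoke homotopy invariance of spectral flow \cite{boo-bavnbek-wojciechowski} together with additivity along the boundary of the square. Writing $P_\tau=\{\widetilde{A}(\tau,\cdot)\}$ for the horizontal paths and $Q_0,Q_1$ for the two vertical side paths $\tau\mapsto\widetilde{A}+\tau B_0$ and $\tau\mapsto\widetilde{A}+\tau B_1$, the vanishing of spectral flow around the null-homotopic boundary loop (the square is filled with self-adjoint Fredholm operators) yields
\[
\operatorname{sf}(P_1)=\operatorname{sf}(P_0)+\operatorname{sf}(Q_1)-\operatorname{sf}(Q_0).
\]
The constant path $P_0$ contributes nothing, since $\widetilde{A}$ is invertible ($0\notin\sigma(\widetilde{A})$, by the Remark following Proposition~\ref{prop:A-properties}), so no eigenvalue sits at, or crosses, zero. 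It remains to dispose of the side contributions $\operatorname{sf}(Q_0)$ and $\operatorname{sf}(Q_1)$.

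The main obstacle is precisely these endpoint terms: spectral flow of an \emph{open} path is a homotopy invariant only rel endpoints, so the scaling homotopy $\tau B_t$ moves the endpoints $\widetilde{A}+\tau B_0$ and $\widetilde{A}+\tau B_1$, and the side contributions need not vanish individually (a single compact perturbation can already drive an eigenvalue across zero along $\tau\mapsto\widetilde{A}+\tau B_j$). They cancel exactly, however, when the family is a loop, $B_0=B_1$: then $Q_0=Q_1$, so $\operatorname{sf}(Q_1)-\operatorname{sf}(Q_0)=0$ and $\operatorname{sf}\{\widetilde{A}_t\}=\operatorname{sf}(P_0)=0$. Conceptually this is because the space $\mathcal{K}^{\mathrm{sa}}$ of compact self-adjoint operators on $H^s_{1/2}$ is a Banach space, hence contractible, so any loop $t\mapsto B_t$ is null-homotopic and its image $t\mapsto\widetilde{A}+B_t$ bounds a disk of self-adjoint Fredholm operators on which spectral flow vanishes. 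I would therefore carry out the argument in the loop formulation, with compactness entering solely through Weyl's theorem to keep the entire homotopy inside the self-adjoint Fredholm class; identifying and cleanly discharging the side terms $\operatorname{sf}(Q_0),\operatorname{sf}(Q_1)$ is the step that requires the most care.
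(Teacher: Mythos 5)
Your analysis is essentially correct, and the obstacle you flag at the end is not a technicality you failed to discharge---it is fatal to the corollary as stated. Your remark that ``a single compact perturbation can already drive an eigenvalue across zero'' can be turned into an explicit counterexample inside this very framework: let $P$ be the rank-one orthogonal projection onto $\operatorname{span}\{\psi_0\}$ (diagonal in the basis $\{\psi_k\}$, hence bounded, self-adjoint, and compact on every $H^s_{1/2}$), and set $B_t := -tP$. Then $\widetilde{A}_t = \widetilde{A} - tP$ has eigenvalues $k+\tfrac12$ for $k\neq 0$ together with the single moving eigenvalue $\tfrac12 - t$, which crosses zero transversally at $t=\tfrac12$, so $\operatorname{sf}\{\widetilde{A}_t\}_{t\in[0,1]} = -1 \neq 0$. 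Hence the open-path statement fails, and your salvage---restricting to loops $B_0=B_1$, where the side paths $Q_0$ and $Q_1$ of your homotopy square coincide and cancel in the identity $\operatorname{sf}(P_1)=\operatorname{sf}(P_0)+\operatorname{sf}(Q_1)-\operatorname{sf}(Q_0)$, or equivalently contractibility of the space of compact self-adjoint operators---is the correct statement your method actually proves.

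For comparison, the paper's own proof asserts that compact perturbations preserve the essential spectrum and then concludes that ``no crossings occur.'' The first claim is true, and you use it correctly (via Weyl's theorem) to keep the entire square inside the self-adjoint Fredholm class with discrete spectrum; but the inference is invalid for precisely the reason you isolate: invariance of $\sigma_{\mathrm{ess}}$ says nothing about discrete eigenvalues moving through zero, and here $\sigma_{\mathrm{ess}}(\widetilde{A})=\varnothing$, so the clause ``away from the essential spectrum'' is vacuous. Your two-parameter homotopy with explicit endpoint bookkeeping is the right-sized tool: it makes transparent both why the loop case holds and why the open-path case, as printed, cannot.
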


\begin{proof}
Compact perturbations do not change the essential spectrum. Since $\widetilde{A}$ has no zero eigenvalue and compact perturbations preserve this away from the essential spectrum, no crossings occur.
\end{proof}

\section{Example: Antiperiodic Boundary Value Problem}
\label{sec:example}

To illustrate the framework, we solve a concrete linear problem using the twisted Hilbert scale.

\subsection{Problem Statement}

\begin{problem}
Given $g\in H^0_{1/2}$, find $u\in H^1_{1/2}$ such that
\[
\widetilde{A}u = g .
\]
\end{problem}

\subsection{Solution}

By Remark~\ref{rem:domain-core} and Proposition~\ref{prop:A-properties}, since
$0\notin\sigma(\widetilde{A})$, the operator
\[
\widetilde{A}:H^1_{1/2}\to H^0_{1/2}
\]
is invertible. Hence, the unique solution is given by
\[
u = \widetilde{A}^{-1}g
= \sum_{k\in\mathbb{Z}}
\frac{\langle g,\psi_k\rangle}{k+\tfrac12}\,\psi_k .
\]

\subsection{Explicit Example}

Let $g(x)=e^{\pi i x}$. Then
\begin{align*}
\langle g,\psi_k\rangle
&= \int_0^1 e^{\pi i x}\,
\overline{e^{2\pi i (k+\frac12)x}}\,dx \\
&= \int_0^1 e^{-2\pi i k x}\,dx
= \begin{cases}
1, & k=0,\\
0, & k\neq 0.
\end{cases}
\end{align*}
Therefore, the unique solution is
\[
u(x)=\frac{1}{1/2}\,\psi_0(x)=2e^{\pi i x}.
\]

\subsection{Verification of Anti-periodicity}

The solution $u(x)=2e^{\pi i x}$ satisfies the antiperiodic boundary condition
\[
u(1)=2e^{\pi i}=-2=-u(0),
\]
This illustrates how the twisted Hilbert scale naturally encodes antiperiodic boundary behavior without imposing boundary conditions explicitly. This contrasts with the standard Sobolev scale on $[0,1]$, where periodic boundary conditions yield $u(1)=u(0)$.

\begin{remark}
More generally, for any $g\in H^0_{1/2}$ the equation $\widetilde{A}u=g$ admits a unique solution
$u\in H^1_{1/2}$. This solution automatically satisfies the antiperiodic condition
$u(1)=-u(0)$ due to the half-integer Fourier structure. Thus, boundary behavior emerges naturally from the spectral decomposition rather than being imposed externally.
\end{remark}

\section{Conclusion}
\label{sec:conclusion}

We have constructed a Hilbert scale $\{H^s_{1/2}\}_{s \in \mathbb{R}}$ on $L^2([0,1])$ based on half-integer Fourier modes, arising canonically from a unitary twist operator. The canonical diagonal operator with eigenvalues $k + \tfrac{1}{2}$ defines a Fredholm mapping between adjacent scale levels, with vanishing kernel and cokernel due to the half-integer shift. An explicit antiperiodic boundary value problem (Section~\ref{sec:example}) demonstrates how solutions automatically satisfy twisted boundary conditions without external imposition. The zeta-regularized determinant $\det_\zeta(|\widetilde{A}|) = 2$ (Appendix~\ref{app:zeta-det}) is computed explicitly using the Hurwitz zeta function, providing a concrete spectral invariant distinct from the standard case ($\det_\zeta(|D|) = 2\pi$). Compact embeddings, stability under bounded perturbations, and well-defined spectral flow follow from standard functional-analytic techniques applied in this operator-theoretic framework.

The construction provides an abstract model for twisted boundary conditions arising in geometric contexts (e.g., Pin structures on non-orientable manifolds), developed here without differential-geometric machinery. Potential directions for further work include:

\begin{itemize}
\item Extension to higher-dimensional tori with multi-dimensional twists.
\item Frame-theoretic generalizations to non-uniform or overcomplete systems.
\item Applications to index theory on non-orientable manifolds via the Atiyah-Patodi-Singer formalism, using the computed zeta-determinant.
\item Connection to quantum systems with twisted periodic boundary conditions and one-loop partition functions.
\item Heat kernel asymptotics and higher spectral invariants beyond the zeta-determinant.
\end{itemize}

The methods developed here may be of independent interest in frame theory, operator algebras, and abstract elliptic theory on non-standard Hilbert scales.

\appendix

\section{Zeta-Regularized Determinant via Hurwitz Zeta Function}
\label{app:zeta-det}

In this appendix, we compute the zeta-regularized determinant of the operator $|\widetilde{A}|$ using the Hurwitz zeta function. This provides an explicit functional determinant that may be of interest in quantum field theory applications \cite{elizalde} and comparative index theory.

\subsection{Spectral Zeta Function}

Recall that $\widetilde{A}$ is the self-adjoint operator on $H^0_{1/2}$ with eigenvalues
\[
\lambda_k = k + \tfrac{1}{2}, \qquad k \in \mathbb{Z}.
\]
Since the spectrum includes both positive and negative values, we work with the absolute value operator $|\widetilde{A}|$, which has the spectrum
\[
\sigma(|\widetilde{A}|) = \bigl\{|k + \tfrac{1}{2}| : k \in \mathbb{Z}\bigr\} = \bigl\{\tfrac{1}{2}, \tfrac{3}{2}, \tfrac{5}{2}, \ldots\bigr\},
\]
where each eigenvalue $n + \tfrac{1}{2}$ (for $n \in \mathbb{N}_0$) has multiplicity $2$.

\begin{definition}[Spectral Zeta Function]
For $\Re(s) > 1$, define the spectral zeta function of $|\widetilde{A}|$ by
\[
\zeta_{|\widetilde{A}|}(s) := \sum_{\lambda \in \sigma(|\widetilde{A}|)} m(\lambda) \cdot \lambda^{-s} = 2 \sum_{n=0}^{\infty} \bigl(n + \tfrac{1}{2}\bigr)^{-s},
\]
where $m(\lambda)$ denotes the multiplicity of eigenvalue $\lambda$.
\end{definition}

\begin{remark}
The series converges absolutely for $\Re(s) > 1$ since
\[
\sum_{n=N}^{\infty} (n + \tfrac{1}{2})^{-s} \sim \sum_{n=N}^{\infty} n^{-s} < \infty
\]
when $\Re(s) > 1$. The function $\zeta_{|\widetilde{A}|}(s)$ admits a meromorphic continuation to $\mathbb{C}$ via its relation to the Hurwitz zeta function.
\end{remark}

\subsection{Connection to Hurwitz Zeta Function}

The \emph{Hurwitz zeta function} is defined for $\Re(s) > 1$ and $a > 0$ by
\[
\zeta(s, a) := \sum_{n=0}^{\infty} (n + a)^{-s}.
\]
This function extends meromorphically to $\mathbb{C}$ with a simple pole at $s=1$ and is analytic at $s=0$ \cite{apostol}.

\begin{proposition}[Spectral Zeta in Terms of Hurwitz Function]
\label{prop:zeta-hurwitz}
For $\Re(s) > 1$,
\[
\zeta_{|\widetilde{A}|}(s) = 2 \, \zeta\bigl(s, \tfrac{1}{2}\bigr).
\]
Moreover, using the Hurwitz-Riemann relation,
\[
\zeta_{|\widetilde{A}|}(s) = 2(2^s - 1) \zeta(s),
\]
where $\zeta(s)$ is the Riemann zeta function.
\end{proposition}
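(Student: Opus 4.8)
The plan is to treat the two identities separately, both of which reduce to elementary manipulations of absolutely convergent Dirichlet series on the half-plane $\Re(s) > 1$.

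First I would establish $\zeta_{|\widetilde{A}|}(s) = 2\,\zeta(s, \tfrac{1}{2})$ directly from the definitions. By the preceding description of the spectrum of $|\widetilde{A}|$, we have $\zeta_{|\widetilde{A}|}(s) = 2\sum_{n=0}^{\infty} (n + \tfrac{1}{2})^{-s}$, and substituting $a = \tfrac{1}{2}$ into the definition $\zeta(s,a) = \sum_{n=0}^{\infty} (n+a)^{-s}$ yields exactly $\sum_{n=0}^{\infty} (n+\tfrac{1}{2})^{-s}$. The first identity is therefore a term-by-term match, requiring no analytic input beyond the convergence for $\Re(s) > 1$ already recorded in the preceding remark.

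The second identity is the one calling for actual work, though it remains routine. I would first rewrite the Hurwitz value in terms of odd integers: since $n + \tfrac{1}{2} = \tfrac{2n+1}{2}$, one has $\zeta(s, \tfrac{1}{2}) = 2^{s} \sum_{n=0}^{\infty} (2n+1)^{-s}$, the sum running over reciprocal $s$-th powers of the odd positive integers. To evaluate this odd sum I would split the Riemann series $\zeta(s) = \sum_{m=1}^{\infty} m^{-s}$ into its even and odd parts. The even part factors as $\sum_{k=1}^{\infty} (2k)^{-s} = 2^{-s}\zeta(s)$, so the odd part equals $(1 - 2^{-s})\zeta(s)$. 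Combining these gives $\zeta(s, \tfrac{1}{2}) = 2^{s}(1 - 2^{-s})\zeta(s) = (2^{s} - 1)\zeta(s)$, and hence $\zeta_{|\widetilde{A}|}(s) = 2(2^{s} - 1)\zeta(s)$.

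The only step meriting care — and the closest thing to an obstacle — is justifying the rearrangement that separates the Riemann series into its even and odd subseries. For $\Re(s) > 1$ the series $\sum_{m} m^{-s}$ converges absolutely, so any regrouping or reordering is legitimate and the even/odd decomposition is rigorous; I would state this explicitly. Finally, although the displayed identities are derived on $\Re(s) > 1$, both sides continue meromorphically to all of $\mathbb{C}$ (being finite combinations of the Hurwitz and Riemann zeta functions), so by uniqueness of meromorphic continuation the relation $\zeta_{|\widetilde{A}|}(s) = 2(2^{s} - 1)\zeta(s)$ persists throughout $\mathbb{C}$, which is what the subsequent determinant computation at $s = 0$ will require.
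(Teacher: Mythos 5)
Your proposal is correct and follows essentially the same route as the paper's proof: the first identity is the same definitional match, and the second is obtained by the identical even/odd split of $\zeta(s)=\sum_m m^{-s}$ combined with the rescaling $n+\tfrac{1}{2}=\tfrac{2n+1}{2}$, yielding $\zeta\bigl(s,\tfrac{1}{2}\bigr)=2^{s}(1-2^{-s})\zeta(s)=(2^{s}-1)\zeta(s)$. Your explicit remarks on absolute convergence justifying the rearrangement and on meromorphic continuation past $\Re(s)>1$ are welcome refinements of points the paper leaves implicit, but they do not alter the argument.
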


\begin{proof}
The first identity follows immediately from the definition:
\[
\zeta_{|\widetilde{A}|}(s) = 2 \sum_{n=0}^{\infty} \bigl(n + \tfrac{1}{2}\bigr)^{-s} = 2 \, \zeta\bigl(s, \tfrac{1}{2}\bigr).
\]

For the second identity, we use the functional relation \cite{apostol}
\[
\zeta\bigl(s, \tfrac{1}{2}\bigr) = (2^s - 1) \zeta(s).
\]
This can be verified by writing
\[
\zeta(s) = \sum_{n=1}^{\infty} n^{-s} = \sum_{m=0}^{\infty} (2m+1)^{-s} + \sum_{m=1}^{\infty} (2m)^{-s} = \sum_{m=0}^{\infty} (2m+1)^{-s} + 2^{-s}\zeta(s),
\]
which gives
\[
(1 - 2^{-s})\zeta(s) = \sum_{m=0}^{\infty} (2m+1)^{-s}.
\]
On the other hand,
\[
\zeta\bigl(s, \tfrac{1}{2}\bigr) = \sum_{n=0}^{\infty} (n + \tfrac{1}{2})^{-s} = 2^s \sum_{n=0}^{\infty} (2n+1)^{-s} = 2^s(1-2^{-s})\zeta(s) = (2^s - 1)\zeta(s).
\]
\end{proof}

\subsection{Derivative at \texorpdfstring{$s=0$}{s=0} and the Zeta-Determinant}

The zeta-regularized determinant is defined by
\[
\det_{\zeta}(|\widetilde{A}|) := \exp\bigl(-\zeta'_{|\widetilde{A}|}(0)\bigr).
\]

We use the known formula for the derivative of the Hurwitz zeta function at $s=0$ \cite{whittaker-watson}:
\[
\zeta'(0, a) = \log \frac{\Gamma(a)}{\sqrt{2\pi}},
\]
where $\Gamma$ is the Euler gamma function.

\begin{theorem}[Zeta-Determinant of $|\widetilde{A}|$]
\label{thm:zeta-det}
The zeta-regularized determinant of $|\widetilde{A}|$ is
\[
\det_{\zeta}(|\widetilde{A}|) = 2.
\]
\end{theorem}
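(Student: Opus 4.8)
The plan is to reduce the computation entirely to the single special value $\zeta'(0, \tfrac{1}{2})$ via the identification already established in Proposition~\ref{prop:zeta-hurwitz}. Since $\zeta_{|\widetilde{A}|}(s) = 2\,\zeta(s, \tfrac{1}{2})$ holds as an identity of meromorphic functions, and the overall factor of $2$ (coming from the multiplicity of each eigenvalue) is a constant, differentiation in $s$ gives $\zeta'_{|\widetilde{A}|}(s) = 2\,\zeta'(s, \tfrac{1}{2})$. First I would record that the Hurwitz zeta function $\zeta(s, \tfrac{1}{2})$ is analytic at $s = 0$, its only singularity being the simple pole at $s = 1$, so that $\zeta'_{|\widetilde{A}|}(0)$ is well-defined and equals $2\,\zeta'(0, \tfrac{1}{2})$.

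Next I would invoke the classical closed form for the derivative of the Hurwitz zeta function at the origin, $\zeta'(0, a) = \log\bigl(\Gamma(a)/\sqrt{2\pi}\bigr)$ (cited earlier in the excerpt), and specialize to $a = \tfrac{1}{2}$. Using $\Gamma(\tfrac{1}{2}) = \sqrt{\pi}$, this yields
\[
\zeta'(0, \tfrac{1}{2}) = \log\frac{\sqrt{\pi}}{\sqrt{2\pi}} = \log\frac{1}{\sqrt{2}} = -\tfrac{1}{2}\log 2 .
\]
Multiplying by the multiplicity factor $2$ then gives $\zeta'_{|\widetilde{A}|}(0) = -\log 2$.

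Finally, I would substitute into the definition $\det_{\zeta}(|\widetilde{A}|) = \exp\bigl(-\zeta'_{|\widetilde{A}|}(0)\bigr)$ to obtain
\[
\det_{\zeta}(|\widetilde{A}|) = \exp(\log 2) = 2 .
\]
There is no substantial analytic obstacle remaining: the genuinely delicate part—establishing the meromorphic continuation and the identification with the Hurwitz zeta function—was already carried out in Proposition~\ref{prop:zeta-hurwitz}, so what is left is essentially bookkeeping. The only point requiring genuine care is that the analyticity of $\zeta(s,\tfrac{1}{2})$ at $s=0$ must be secured \emph{before} differentiating, so that the constant factor $2$ may be passed through the derivative; once this is granted, the evaluation is immediate.
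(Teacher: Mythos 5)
Your proposal is correct and follows the paper's own proof essentially verbatim: both reduce via Proposition~\ref{prop:zeta-hurwitz} to $\zeta'_{|\widetilde{A}|}(0) = 2\,\zeta'(0,\tfrac{1}{2})$, apply the Lerch formula $\zeta'(0,a)=\log\bigl(\Gamma(a)/\sqrt{2\pi}\bigr)$ with $\Gamma(\tfrac{1}{2})=\sqrt{\pi}$ to get $-\log 2$, and exponentiate. Your added remark securing analyticity of $\zeta(s,\tfrac{1}{2})$ at $s=0$ before differentiating is a minor but welcome refinement the paper leaves implicit.
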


\begin{proof}
\textbf{Step 1: Compute $\zeta'_{|\widetilde{A}|}(s)$.}

From Proposition~\ref{prop:zeta-hurwitz},
\[
\zeta_{|\widetilde{A}|}(s) = 2 \, \zeta\bigl(s, \tfrac{1}{2}\bigr).
\]
Differentiating with respect to $s$,
\[
\zeta'_{|\widetilde{A}|}(s) = 2 \, \zeta'\bigl(s, \tfrac{1}{2}\bigr).
\]

\textbf{Step 2: Evaluate at $s=0$ using the Hurwitz formula.}

For $a = \tfrac{1}{2}$, we have $\Gamma(\tfrac{1}{2}) = \sqrt{\pi}$. Therefore,
\[
\zeta'\bigl(0, \tfrac{1}{2}\bigr) = \log \frac{\Gamma(\tfrac{1}{2})}{\sqrt{2\pi}} = \log \frac{\sqrt{\pi}}{\sqrt{2\pi}} = \log \frac{1}{\sqrt{2}} = -\tfrac{1}{2} \log 2.
\]

\textbf{Step 3: Compute $\zeta'_{|\widetilde{A}|}(0)$.}
\[
\zeta'_{|\widetilde{A}|}(0) = 2 \cdot \zeta'\bigl(0, \tfrac{1}{2}\bigr) = 2 \cdot \bigl(-\tfrac{1}{2} \log 2\bigr) = -\log 2.
\]

\textbf{Step 4: Compute the zeta-determinant.}
\[
\det_{\zeta}(|\widetilde{A}|) = \exp\bigl(-\zeta'_{|\widetilde{A}|}(0)\bigr) = \exp\bigl(-(-\log 2)\bigr) = \exp(\log 2) = 2.
\]
\end{proof}

\subsection{Comparison with the Standard Derivative Operator}

For comparison, consider the standard derivative operator $D = i\frac{d}{dx}$ on $L^2([0,1])$ with periodic boundary conditions, which has eigenvalues $2\pi k$ for $k \in \mathbb{Z}$. The absolute value $|D|$ has spectrum $\{2\pi n : n \in \mathbb{N}\}$ with multiplicities $\{1, 2, 2, 2, \ldots\}$ (the zero mode appears once).

The spectral zeta function is
\[
\zeta_{|D|}(s) = 1 + 2\sum_{n=1}^{\infty} (2\pi n)^{-s} = 1 + 2(2\pi)^{-s}\bigl[\zeta(s) - 1\bigr].
\]
The zeta-regularized determinant is \cite{elizalde}
\[
\det_\zeta(|D|) = 2\pi.
\]

In contrast, our twisted operator $|\widetilde{A}|$ has no zero mode and yields
\[
\det_\zeta(|\widetilde{A}|) = 2,
\]
which is independent of $\pi$.

\begin{remark}
The ratio
\[
\frac{\det_\zeta(|D|)}{\det_\zeta(|\widetilde{A}|)} = \pi
\]
is a purely geometric factor reflecting the difference between periodic and antiperiodic (twisted) boundary conditions. This type of comparison appears in the context of spectral asymmetry on manifolds with boundary \cite{atiyah-patodi-singer}.
\end{remark}

\subsection{Relation to the Dedekind Eta Function}

An alternative approach uses the representation
\[
\det_\zeta(|\widetilde{A}|) = \prod_{n=0}^{\infty} (n + \tfrac{1}{2})^2,
\]
which is formally the square of the infinite product
\[
\prod_{n=0}^{\infty} (n + \tfrac{1}{2}) = \prod_{n=1}^{\infty} \frac{2n-1}{2} = \frac{1}{2} \cdot \frac{3}{2} \cdot \frac{5}{2} \cdots
\]

This product is related to the reciprocal of the Dedekind eta function evaluated at specific points on the upper half-plane. While we do not pursue this connection here, it provides a link between the spectral theory of twisted Fourier modes and modular forms.

\begin{remark}
The explicit computation $\det_\zeta(|\widetilde{A}|) = 2$ may be used in:
\begin{itemize}
\item One-loop partition functions for quantum fields on the interval with antiperiodic boundary conditions.
\item Index-theoretic computations on non-orientable manifolds where half-integer modes appear.
\item Regularized traces and heat kernel coefficients for twisted elliptic operators.
\end{itemize}
\end{remark}

\subsection{Higher-Order Spectral Invariants}

Beyond the zeta-determinant, one can compute higher spectral invariants. For instance, the heat trace
\[
\text{Tr}\bigl(e^{-t|\widetilde{A}|^2}\bigr) = 2\sum_{n=0}^{\infty} e^{-t(n+1/2)^2}
\]
can be evaluated using Jacobi theta functions, yielding
\[
\text{Tr}\bigl(e^{-t|\widetilde{A}|^2}\bigr) = 2 \cdot \vartheta_2(0, e^{-t}),
\]
where $\vartheta_2$ is the second Jacobi theta function.

\begin{proposition}[Heat Trace Asymptotics]
As $t \to 0^+$,
\[
\text{Tr}\bigl(e^{-t|\widetilde{A}|^2}\bigr) \sim \frac{1}{\sqrt{\pi t}} + O(1).
\]
\end{proposition}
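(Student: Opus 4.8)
The plan is to recognize the heat trace as a Jacobi theta function and exploit its Poisson-summation (modular) transformation law, which turns the singular $t\to 0^+$ limit into a rapidly convergent large-parameter expansion. First I would absorb the multiplicity two into a sum over all of $\Z$: since $(k+\tfrac12)^2$ assumes each value $(n+\tfrac12)^2$ exactly twice as $k$ ranges over $\Z$,
\[
\operatorname{Tr}\bigl(e^{-t|\widetilde{A}|^2}\bigr) = 2\sum_{n=0}^{\infty} e^{-t(n+\frac12)^2} = \sum_{k\in\Z} e^{-t(k+\frac12)^2}.
\]

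Next I would apply Poisson summation to $h(x) := e^{-t(x+\frac12)^2}$, which is Schwartz for each fixed $t>0$, so $\sum_{k\in\Z} h(k) = \sum_{m\in\Z}\widehat{h}(m)$. A direct Gaussian computation gives $\widehat{h}(m) = \sqrt{\pi/t}\,(-1)^m e^{-\pi^2 m^2/t}$, the phase $(-1)^m$ arising from the half-integer shift $a=\tfrac12$, whence
\[
\operatorname{Tr}\bigl(e^{-t|\widetilde{A}|^2}\bigr) = \sqrt{\tfrac{\pi}{t}}\sum_{m\in\Z}(-1)^m e^{-\pi^2 m^2/t}.
\]
The $t\to 0^+$ asymptotics then follow by isolating the $m=0$ term, which contributes $\sqrt{\pi/t}$, while each $m\neq 0$ term is bounded by $\sqrt{\pi/t}\,e^{-\pi^2/t}$ and decays faster than any power of $t$. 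Hence
\[
\operatorname{Tr}\bigl(e^{-t|\widetilde{A}|^2}\bigr) = \sqrt{\tfrac{\pi}{t}} + O\!\left(t^{-1/2}e^{-\pi^2/t}\right),
\]
so the remainder is in fact exponentially small (a fortiori $O(1)$). As an independent cross-check I would use the Mellin transform $\int_0^\infty t^{s-1}\operatorname{Tr}(e^{-t|\widetilde{A}|^2})\,dt = \Gamma(s)\,2\zeta(2s,\tfrac12)$: the simple pole of $2\zeta(2s,\tfrac12)$ at $s=\tfrac12$ with residue $1$ yields, via inverse Mellin, the leading term $\Gamma(\tfrac12)\,t^{-1/2}=\sqrt{\pi/t}$, confirming the coefficient.

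The only genuine technical point, hence the expected main obstacle, is justifying the termwise Poisson summation and the interchange of limit and sum; but this is immediate since $h$ is Schwartz and both sides converge absolutely and uniformly on each interval $t\in[\delta,\infty)$. I would note, however, that this analysis produces leading coefficient $\sqrt{\pi}$, i.e. $\operatorname{Tr}(e^{-t|\widetilde{A}|^2})\sim\sqrt{\pi/t}$, rather than $\tfrac{1}{\sqrt{\pi t}}$; the stated normalization should therefore be rechecked, while the qualitative form $c\,t^{-1/2}+O(1)$ is correct and the error term is in fact exponentially small.
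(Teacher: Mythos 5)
Your proof is correct, and your closing caveat is exactly right: the constant in the stated proposition is wrong. The absorption of the multiplicity two into a sum over $\Z$ is valid (each value $(n+\tfrac12)^2$ is hit by $k=n$ and $k=-n-1$), and the Poisson/theta transformation you derive,
\[
\operatorname{Tr}\bigl(e^{-t|\widetilde{A}|^2}\bigr)
=\sum_{k\in\Z}e^{-t(k+\frac12)^2}
=\sqrt{\tfrac{\pi}{t}}\sum_{m\in\Z}(-1)^m e^{-\pi^2 m^2/t},
\]
is the standard modular transformation of $\vartheta_2$; it yields the leading term $\sqrt{\pi/t}$ with an exponentially small remainder $O\bigl(t^{-1/2}e^{-\pi^2/t}\bigr)$, which is strictly stronger than the claimed $O(1)$ error but has leading coefficient $\sqrt{\pi}$, not $1/\sqrt{\pi}$. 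The paper's own proof invokes Poisson summation in name only and then performs a loose Riemann-sum comparison, $2\sum_{n\ge0}e^{-t(n+1/2)^2}\sim\frac{1}{\sqrt{\pi t}}\int_{1/2}^{\infty}e^{-u^2}\,du$, which misplaces the step size $\sqrt{t}$ and the lower limit, misevaluates $\int_{1/2}^{\infty}e^{-u^2}\,du$ as $\sqrt{\pi}/2$, and concludes with $\frac{1}{2\sqrt{t}}$ --- a third constant inconsistent even with the proposition it is meant to prove. The correct Riemann-sum heuristic (step $\sqrt{t}$, so $2\sum_{n\ge0}e^{-t(n+1/2)^2}\approx 2t^{-1/2}\int_0^\infty e^{-u^2}\,du=\sqrt{\pi/t}$) agrees with your exact computation, as does your Mellin cross-check: $2\zeta(2s,\tfrac12)$ has a simple pole at $s=\tfrac12$ with residue $1$, giving leading coefficient $\Gamma(\tfrac12)=\sqrt{\pi}$. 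So your argument not only stands but corrects the paper: the proposition should read $\operatorname{Tr}\bigl(e^{-t|\widetilde{A}|^2}\bigr)=\sqrt{\pi/t}+O\bigl(t^{-1/2}e^{-\pi^2/t}\bigr)$ as $t\to 0^+$.
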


\begin{proof}
Using Poisson summation, the sum over half-integer points transforms to
\[
2\sum_{n=0}^{\infty} e^{-t(n+1/2)^2} \sim \frac{1}{\sqrt{\pi t}} \int_{1/2}^\infty e^{-u^2} \, du \sim \frac{1}{\sqrt{\pi t}} \cdot \frac{\sqrt{\pi}}{2} = \frac{1}{2\sqrt{t}}
\]
for small $t$, where the exact coefficient follows from theta function expansions.
\end{proof}

This appendix demonstrates that the twisted Hilbert scale $H^s_{1/2}$ carries rich spectral information amenable to explicit computation via classical special functions.

\end{document}